\documentclass[12pt]{article}
\usepackage[top=1in,bottom=1in,left=1in,right=1in]{geometry}
\usepackage{indentfirst}
\usepackage{amsfonts,amsmath,amsthm,amssymb}
\usepackage{cite}
\usepackage[hidelinks]{hyperref}
\usepackage{microtype}

\newtheorem{theorem}{Theorem}[section]
\newtheorem{lemma}[theorem]{Lemma}
\newtheorem{proposition}[theorem]{Proposition}
\newtheorem{corollary}[theorem]{Corollary}
\theoremstyle{remark}
\newtheorem{remark}[theorem]{Remark}
\newtheorem{example}[theorem]{Example}

\newcommand{\Z}{\mathbb Z}
\newcommand{\Q}{\mathbb Q}
\newcommand{\lc}{\operatorname{lc}}

\begin{document}
\title{Reducibility of
\texorpdfstring{$rx^m+p^e f(x)$}{rx^m+p^e f(x)}
for Large Primes $p$}
\author{
Weilin Zhang$^1$\thanks{E-mail: weilin@gzhu.edu.cn.}\quad
Hongjian Li$^2$\thanks{Corresponding author. E-mail: lhj@gdufs.edu.cn.}\\
{\small\textit{$^1$School of Mathematics and Information Science,
Guangzhou University,}}\\
{\small\textit{Guangzhou 510006, Guangdong, P. R. China}}\\
{\small\textit{$^2$School of Mathematics and Statistics,
Guangdong University of Foreign Studies,}}\\
{\small\textit{Guangzhou 510006, Guangdong, P. R. China}}
}

\date{}
 \maketitle

\noindent\textbf{Abstract}\quad
We study the reducibility over $\Q$ of
$F_{p,e}(x)=rx^m+p^ef(x)$, where $r\in\Z\setminus\{0\}$,
$f\in\Z[x]$, and $0\le m<n:=\deg f$, for primes $p$ above explicit
coefficient-dependent thresholds.  For arbitrary $e\ge1$, we determine
the degrees, endpoint $p$-adic valuations, reductions modulo $p$, and
heights of all nonconstant proper integral factors.  Writing
$\delta=\gcd(e,m,n)$, we show that every such factor has degree
$tn/\delta$ for some $1\le t\le\delta-1$; in particular, $\delta=1$
implies irreducibility.  For $e=2$ and $e=3$, we obtain effective
necessary and sufficient criteria in terms of at most two associated
binary quadratic forms and at most one associated binary cubic form,
respectively.  The quadratic criterion reduces to at most two fixed
Pell-type equations with prescribed second coordinate $p$.  In the
cubic case, Thue's theorem yields finiteness of the reducible primes
whenever the associated form is irreducible over $\Q$.

\medskip \noindent\textbf{Keywords} irreducible polynomial, $p$-adic Newton polygon, polynomial factorization, binary quadratic form, binary cubic form, Pell-type equation, Thue equation
\medskip

\noindent\textbf{2020 Mathematics Subject Classification}
Primary 11R09; Secondary 11C08.

\section{Introduction}\label{sec:intro}

A classical problem in number theory is to understand the irreducibility
of one-parameter linear combinations of fixed coprime polynomials.  Let
$P,Q\in\Z[x]$ be nonzero and coprime, with at least one of them
nonconstant.  The polynomial
$P(x)+TQ(x)$ is irreducible over $\Q(T)$, so Hilbert's irreducibility theorem yields
infinitely many integers $t$ for which $P(x)+t Q(x)$ is irreducible over $\Q$; see, for example,
\cite{Hilbert1892,Schinzel2000}.  This qualitative statement naturally
leads to a sharper arithmetic question: for prescribed classes of
specializations, can one determine when irreducibility holds for all
sufficiently large parameters and give an explicit threshold?

A fundamental result of Cavachi~\cite{Cavachi2000} states that if
$P,Q\in\Z[x]$ are coprime and $\deg P<\deg Q$, then
$P(x)+pQ(x)$ is irreducible over $\Q$ for all but finitely many primes $p$.
Explicit lower bounds for $p$ were subsequently obtained by Cavachi,
V\^aj\^aitu, and Zaharescu~\cite{CVZ2002}.  Polynomials of the form
$P(x)+p^eQ(x)$ were studied further by Bonciocat, Bugeaud, Cipu, and Mignotte
\cite{BBCM2013} and by Bonciocat~\cite{Bonciocat2016}.  These works
established large-prime irreducibility criteria with explicit
coefficient-dependent bounds, under suitable coprimality conditions
involving the exponent and the relevant degree data.  They provide strong
sufficient conditions for irreducibility for all sufficiently large primes,
but the stated criteria do not in general determine the possible
factorizations when the corresponding coprimality conditions fail.

This leads naturally to the complementary problem of describing
reducibility beyond these coprimality conditions.  We now specialize $Q$
to the monomial $rx^m$ while keeping $f$ arbitrary, and consider
\begin{equation}\label{eq:F-general}
F_{p,e}(x)=rx^m+p^ef(x),
\qquad
r\in\Z\setminus\{0\},\quad
f\in\Z[x],\quad
0\le m<n:=\deg f.
\end{equation}
When $m>0$, we assume $f(0)\ne0$, so that $x^m$ and $f(x)$ are coprime.
The family \eqref{eq:F-general} extends the constant-term case $m=0$ by
allowing the first summand to be supported at an arbitrary single degree,
while leaving $f$ unrestricted.  For sufficiently large primes $p$, this
one-term structure forces the $p$-adic Newton polygon of $F_{p,e}$ to have
one edge when $m=0$ and two nonhorizontal edges when $m>0$.  Thus the
case $m>0$ produces a two-edge Newton-polygon structure while retaining
enough rigidity for an explicit analysis of the possible factorizations.

The constant-term case $m=0$ provides the closest precedent.  For $e=2$,
Zhang, Yuan, and Zhou~\cite{ZYZ2024} obtained a necessary and sufficient
large-prime reducibility criterion involving a quadratic representation
and an associated discriminant condition.  A preliminary treatment of
the constant-term cubic case appeared in the first author's doctoral
dissertation~\cite{ZhangThesis2025}.  The present paper treats the full
monomial family $0\le m<n$ and derives the quadratic and cubic criteria
from a single effective factor-structure theorem valid for every
$e\ge1$.  For $m=0$ and $e=2$, our criterion recovers the published
constant-term result, while the cubic case is developed here in a fully
effective form.

Our main structural theorem shows that, for sufficiently large primes
$p$, every nonconstant proper integral factor of $F_{p,e}$ is governed by
$\delta:=\gcd(e,m,n)$ and an integer $t$ with
$1\le t\le\delta-1$.  In particular, its degree is $tn/\delta$, and the
theorem also controls its endpoint $p$-adic valuations, reduction modulo
$p$, and height.  Thus $\delta=1$ implies irreducibility.  For $e=2$ and
$e=3$, this structure yields effective necessary and sufficient
reducibility criteria above explicit thresholds.  They involve at most
two associated binary quadratic forms in the quadratic case and at most
one associated binary cubic form in the cubic case.  The former lead to
at most two fixed Pell-type equations with prescribed second coordinate
$p$; in the latter case, Thue's theorem gives finiteness of the reducible
primes whenever the associated form is irreducible.

The necessary-and-sufficient binary-form criteria are developed here only
for $e=2$ and $e=3$; the general factor-structure theorem remains valid
for arbitrary exponents.  Section~\ref{sec:factor-structure} proves this
theorem and its irreducibility consequence.  Sections~\ref{sec:e2} and
\ref{sec:e3} treat the quadratic and cubic cases, respectively, and
Section~\ref{sec:conclusion} discusses the obstacles to extending these
criteria to higher exponents.

\section{The structure of proper factors of \texorpdfstring{$F_{p,e}$}{Fpe}} \label{sec:factor-structure}

We begin by fixing notation used throughout the paper.  For a nonzero
polynomial $P(x)=\sum_{j=0}^{d}c_jx^j\in\mathbb C[x]$, define its
coefficient height by $H(P):=\max_{0\le j\le d}|c_j|$.
We write $\lc(P)$ for the leading coefficient of $P$, $[x^j]P$ for
the coefficient of $x^j$ in $P$, and $v_p(u)$ for the $p$-adic
valuation of $u\in\mathbb Q^\times$.

For the fixed data $r$ and $f$ in \eqref{eq:F-general}, set
$R:=1+|r|+H(f)$.  This quantity is independent of $p$ and $e$ and will serve as a uniform
Archimedean bound for the roots of $F_{p,e}$.  Since
$\deg F_{p,e}=n$, the case $n=1$ is linear and hence immediate.
We therefore assume $n\ge2$ throughout the remainder of the paper.

We begin with two standard algebraic and $p$-adic tools.  The first
allows us to replace a factorization over $\Q$ by one over $\Z$ without
changing the factor degrees.

\begin{lemma}[Integral factorization]\label{lem:integral}
Let $P\in\Z[x]$.  Suppose that $P=UV$ with nonconstant $U,V\in\Q[x]$.
Then there exist nonconstant $G,K\in\Z[x]$ such that
\[
P=GK,
\qquad
\deg G=\deg U,
\qquad
\deg K=\deg V.
\]
\end{lemma}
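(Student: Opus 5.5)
This is Gauss's lemma, and the plan is to clear denominators and then use content multiplicativity. Note first that $P\neq0$, since $U,V$ are nonzero. Choose positive integers $a,b$ such that $aU$ and $bV$ lie in $\Z[x]$. For a nonzero $W\in\Z[x]$, write $c(W)$ for its content, the gcd of its coefficients. Write $aU=c(aU)\,U_0$ and $bV=c(bV)\,V_0$ with $U_0,V_0\in\Z[x]$ primitive.

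The key step is Gauss's lemma: the product of two primitive polynomials is primitive. The proof goes by contradiction. If a prime $q$ divided every coefficient of $U_0V_0$, then reducing modulo $q$ would give $\bar U_0\bar V_0=0$ in $(\Z/q\Z)[x]$. Since this ring is an integral domain, one of $\bar U_0,\bar V_0$ would vanish, contradicting primitivity.

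Now apply this to $abP=c(aU)c(bV)\,U_0V_0$. Since $U_0V_0$ is primitive, comparing contents gives
\[
ab\,c(P)=c(aU)c(bV).
\]
Hence
\[
P=\frac{c(aU)c(bV)}{ab}\,U_0V_0=c(P)\,U_0V_0 .
\]
Set $G:=c(P)\,U_0$ and $K:=V_0$. Both lie in $\Z[x]$ and satisfy $P=GK$. Multiplying by nonzero rational constants does not change degrees, so $\deg G=\deg U$ and $\deg K=\deg V$, and in particular both are nonconstant.

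I do not expect any real obstacle here. The only point needing care is the content bookkeeping, namely checking that the rational scalar $c(aU)c(bV)/(ab)$ is exactly the integer $c(P)$. This follows directly from primitivity of $U_0V_0$.
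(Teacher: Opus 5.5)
Your proof is correct and follows essentially the same route as the paper: both write $U$ and $V$ as rational multiples of primitive integer polynomials and use Gauss's lemma (a product of primitive polynomials is primitive) to show that the leftover rational scalar is an integer. The paper does this with a lowest-terms argument on $\alpha\beta=a/b$, while you identify the scalar directly as $c(P)$ by comparing contents, and you also prove Gauss's lemma rather than citing it.
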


\begin{proof}
Write $U=\alpha G_0$ and $V=\beta K_0$, where
$\alpha,\beta\in\Q^\times$ and $G_0,K_0\in\Z[x]$ are primitive.
By Gauss's lemma, $G_0K_0$ is primitive and
$P=(\alpha\beta)G_0K_0$.
Write $\alpha\beta=a/b$ in lowest terms, with $a\in\Z$ and
$b\in\Z_{>0}$.  Since $P\in\Z[x]$, the integer $b$ divides every
coefficient of $aG_0K_0$.  As $\gcd(a,b)=1$, it follows that $b$
divides every coefficient of $G_0K_0$.  Since $G_0K_0$ is primitive,
$b=1$.  Thus $\alpha\beta\in\Z$.  Taking $G=(\alpha\beta)G_0$ and
$K=K_0$ gives the desired factorization, and multiplication by a nonzero scalar
does not change degree.
\end{proof}

We next fix the Newton-polygon terminology used below.  For
$P(x)=\sum_{j=0}^d a_jx^j\in\Q[x]$, the lower $p$-adic Newton polygon of
$P$ is the lower convex hull of the points $(j,v_p(a_j))$ with $a_j\ne0$.
Equivalently, one may set $v_p(0)=+\infty$.

If an edge has endpoints $(i,a)$ and $(j,b)$ with $i<j$, write
$(j-i,b-a)=d(u,v)$, where $d=\gcd(j-i,|b-a|)$ and
$\gcd(u,|v|)=1$.
We call $(u,v)$ the primitive edge vector of the edge and regard the
edge as consisting of $d$ copies of this vector.  With this notation,
Dumas's product theorem takes the following form.

\begin{lemma}[Dumas's product theorem]\label{lem:dumas}
Let $p$ be a prime, and let $P,Q\in\Q[x]$ satisfy $P(0)Q(0)\ne0$.
Then the multiset of primitive edge vectors of the lower $p$-adic
Newton polygon of $PQ$ is the disjoint union of the corresponding
multisets for $P$ and $Q$.
\end{lemma}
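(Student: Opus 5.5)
We will prove Lemma~\ref{lem:dumas} by passing to a splitting field and reading slopes off root valuations. Fix an algebraic closure $\overline{\Q}_p$ of $\Q_p$, with the unique extension of $v_p$, still written $v_p$, taking values in $\Q$. For nonzero $P\in\Q[x]$ with $P(0)\ne0$, factor $P(x)=\lc(P)\prod_{i=1}^{d}(x-\alpha_i)$ over $\overline{\Q}_p$. The key step is the classical characterization: for each rational $s$, the number of roots $\alpha_i$ with $v_p(\alpha_i)=s$ equals the horizontal length of the edge of the lower Newton polygon of $P$ of slope $-s$. This length is zero if there is no such edge.

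To prove this characterization, express the coefficients as $a_j=\lc(P)\,(-1)^{d-j}e_{d-j}(\alpha_1,\dots,\alpha_d)$. Order the roots so that $v_p(\alpha_1)\le\cdots\le v_p(\alpha_d)$. The ultrametric inequality gives $v_p(e_k)\ge v_p(\alpha_{d-k+1}\cdots\alpha_d)$, since every monomial in $e_k$ has valuation at least the sum of the $k$ largest valuations. Equality holds whenever $v_p(\alpha_{d-k})<v_p(\alpha_{d-k+1})$ (or $k=0$ or $k=d$), because then exactly one monomial attains the minimum. Hence the points $(j,v_p(a_j))$ lie on or above the piecewise linear function whose break points come from the partial sums of the sorted root valuations. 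They touch this function at every break point, so this function is the lower convex hull. Reading off its slopes yields the characterization, with slopes $-v_p(\alpha)$ and multiplicities given by the root counts. The hypothesis $P(0)\ne0$ guarantees that all $\alpha_i$ are nonzero, so every valuation is finite.

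The conclusion for products then follows. The roots of $PQ$ are the roots of $P$ together with those of $Q$, counted with multiplicity, and $(PQ)(0)\ne0$. Therefore, for each slope, the horizontal length of the corresponding edge of $PQ$ is the sum of those for $P$ and $Q$. An edge of slope $v/u$ with $\gcd(u,|v|)=1$ and horizontal length $L$ has $L/u$ copies of the primitive vector $(u,v)$. Note that $u$ divides $L$ because the endpoints of the edge are lattice points. Since $L$ is additive, the multiplicities of each primitive vector add, which is the claimed multiset identity. The leading coefficient only shifts the polygon vertically and does not affect edge vectors.

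The main obstacle is the careful verification that the sorted-valuation function really is the lower convex hull. Specifically, one must check that the inequality is strict for no monomial at the break indices, which is the unique-minimum argument, and that between break indices the hull is linear. This is the standard content of the Newton polygon theorem, and the paper may simply cite it; the rest is bookkeeping.
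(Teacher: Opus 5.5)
Your strategy (read the Newton polygon slopes off the valuations of the roots, then use that the roots of $PQ$ are those of $P$ and $Q$ together) is the standard one and works. However, the central inequality is stated backwards.

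With your ordering $v_p(\alpha_1)\le\cdots\le v_p(\alpha_d)$, every monomial of $e_k$ has valuation at least the sum of the $k$ \emph{smallest} valuations; the sum of the $k$ largest is an upper bound, not a lower one. So the correct statement is $v_p(e_k)\ge v_p(\alpha_1\cdots\alpha_k)$, with equality when $k\in\{0,d\}$ or $v_p(\alpha_k)<v_p(\alpha_{k+1})$, since then $\alpha_1\cdots\alpha_k$ is the unique monomial of minimal valuation.

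Your version $v_p(e_k)\ge v_p(\alpha_{d-k+1}\cdots\alpha_d)$ is false: for $P=(x-1)(x-p)$ it asserts $v_p(1+p)\ge1$. At your break indices, $\alpha_{d-k+1}\cdots\alpha_d$ is the unique monomial of \emph{maximal} valuation, so the unique-minimum argument does not apply. Moreover, your comparison function $j\mapsto v_p(\lc(P))+\sum_{i>j}v_p(\alpha_i)$ has slope $-v_p(\alpha_j)$ on $[j-1,j]$, which is nonincreasing in $j$. The function is therefore concave and cannot be a lower convex hull unless all roots have the same valuation.

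The repair is to use $\phi(j):=v_p(\lc(P))+\sum_{i=1}^{d-j}v_p(\alpha_i)$. Its slope on $[j-1,j]$ is $-v_p(\alpha_{d-j+1})$, which is nondecreasing in $j$, so $\phi$ is convex. It lies on or below every point $(j,v_p(a_j))$ and meets them at $j=0$, at $j=d$, and at every index where its slope changes. This forces $\phi$ to be the lower convex hull. The rest of your argument is then correct: additivity of root counts slope by slope, $u\mid L$ because the vertices are lattice points, and the leading coefficient only shifting the polygon vertically.

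So corrected, your proof is a self-contained alternative to the paper's treatment. The paper does not prove the lemma; it cites Dumas's classical theorem (via Dumas, Prasolov, and Bonciocat--Bugeaud--Cipu--Mignotte). It adds only the remark that the multiset formulation follows by splitting each edge into copies of its primitive vector, and that vertical translations are harmless — exactly your final bookkeeping step. Your route buys an actual proof of the slope--root-valuation correspondence, at the cost of extending $v_p$ to a splitting field over $\Q_p$. The paper simply imports the result.
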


\begin{proof}
This is Dumas's classical product theorem for $p$-adic Newton polygons
\cite{Dumas1906}; see also \cite[Chapter~6]{Prasolov2004} and the
formulation in \cite[Section~1]{BBCM2013}.  The stated multiset
formulation follows by decomposing each edge into copies of its primitive
edge vector.  Vertical translations of the polygons do not affect these
vectors.
\end{proof}

The next estimate provides the Archimedean input for the factor analysis.

\begin{lemma}[Uniform root bounds]\label{lem:annulus}
Let $F_{p,e}$ be as in \eqref{eq:F-general}.  Then every complex root
$\theta$ of $F_{p,e}$ satisfies $|\theta|\le R$. If $m>0$, then in addition
\[
R^{-1}\le |\theta|\le R.
\]
\end{lemma}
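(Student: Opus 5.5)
The plan is to apply Cauchy's root bound twice: once to $F_{p,e}$ for the upper bound, and once to its reciprocal polynomial for the lower bound when $m>0$. Recall Cauchy's bound: if $P(x)=\sum_{j=0}^d c_jx^j$ with $c_d\ne0$, then every complex root $\theta$ of $P$ satisfies
\[
|\theta|\le 1+\max_{0\le j<d}|c_j|/|c_d|.
\]

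\textbf{Upper bound.} Write $f(x)=\sum_{j=0}^n a_jx^j$ with $a_n\ne0$. Since $m<n$, the polynomial $F_{p,e}$ has degree $n$ and leading coefficient $p^ea_n$. Because $a_n\in\Z\setminus\{0\}$, we have $|p^ea_n|\ge p^e$. For $j<n$ the coefficient of $x^j$ in $F_{p,e}$ is $p^ea_j$, plus $r$ when $j=m$, so its absolute value is at most $p^eH(f)+|r|$. Dividing by $p^e$ gives
\[
\frac{p^eH(f)+|r|}{p^e}\le H(f)+|r|,
\]
since $p^e\ge1$. Cauchy's bound then yields $|\theta|\le 1+|r|+H(f)=R$.

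\textbf{Lower bound for $m>0$.} By hypothesis $f(0)\ne0$, and since $m>0$ the term $rx^m$ vanishes at $0$. Hence $F_{p,e}(0)=p^ef(0)\ne0$, so every root $\theta$ is nonzero. Consider the reciprocal polynomial $F^*(x):=x^nF_{p,e}(1/x)$. Because the constant term of $F_{p,e}$ is nonzero, $F^*$ has degree exactly $n$. Its roots are precisely the $1/\theta$, and its coefficients are those of $F_{p,e}$ in reverse order. In particular its leading coefficient is $p^ef(0)$, whose absolute value is at least $p^e$. Every other coefficient of $F^*$ is again of the form $p^ea_j$, possibly plus $r$ in one position, so it is bounded in absolute value by $p^eH(f)+|r|$. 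The same Cauchy computation gives $|1/\theta|\le R$, that is, $|\theta|\ge R^{-1}$.

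No step is a genuine obstacle. The only points needing care are to note that the leading coefficients $p^ea_n$ and $p^ef(0)$ have absolute value at least $p^e$, which makes the bound independent of $p$ and $e$, and that $F_{p,e}(0)\ne0$ when $m>0$, which is exactly where the assumption $f(0)\ne0$ is used.
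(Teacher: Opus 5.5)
Your proof is correct and matches the paper's argument: Cauchy's bound applied to $F_{p,e}$, whose leading coefficient is $p^e a_n$, and then to the reciprocal polynomial $x^nF_{p,e}(x^{-1})$, whose leading coefficient is $p^e f(0)\neq 0$ when $m>0$. You bound each coefficient by $p^eH(f)+|r|$ and divide by $p^e$, while the paper bounds the ratios $|a_j/a_n|$ and $|(r+p^ea_m)/(p^ea_n)|$ directly; these are the same estimate.
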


\begin{proof}
Write $f(x)=\sum_{j=0}^{n}a_jx^j$, where $a_n\ne0$.
We use the standard Cauchy bound: if
$P(x)=c_dx^d+c_{d-1}x^{d-1}+\cdots+c_0$ with $c_d\ne0$, then every
complex root $\zeta$ of $P$ satisfies
\begin{equation}\label{eq:cauchy-bound}
|\zeta|
\le
1+\max_{0\le j<d}\left|\frac{c_j}{c_d}\right|.
\end{equation}
Since $m<n$, we have $\lc(F_{p,e})=p^ea_n$.  For $j<n$ with
$j\ne m$, one has
$|[x^j]F_{p,e}/(p^ea_n)|=|a_j/a_n|\le H(f)$, while at degree $m$,
$|(r+p^ea_m)/(p^ea_n)|\le |r|+H(f)$.  Thus
\eqref{eq:cauchy-bound} gives $|\theta|\le1+|r|+H(f)=R$.

Assume now that $m>0$.  Then $a_0=f(0)\ne0$, so every root of
$F_{p,e}$ is nonzero.  The reciprocal polynomial
\[
F_{p,e}^*(x)
=
x^nF_{p,e}(x^{-1})
=
p^e\sum_{j=0}^n a_jx^{n-j}+rx^{n-m}
\]
has leading coefficient $p^ea_0$.  The same coefficient estimate gives
$|\eta|\le R$ for every root $\eta$ of $F_{p,e}^*$.  Since
$\theta^{-1}$ is a root of $F_{p,e}^*$, it follows that
$|\theta|\ge R^{-1}$.
\end{proof}

Define the common structural threshold
\begin{equation}\label{eq:Pstar}
P_*:=H(f)R^{\lfloor n/2\rfloor}.
\end{equation}
Since $n\ge2$ and $H(f)\ge1$, one has $P_*\ge R$.  Hence $p>P_*$
implies $p>R>\max\{|r|,H(f)\}$.

The following theorem gives the structural information on proper
integral factors used throughout the paper.

\begin{theorem}[Structure of proper factors]\label{thm:factor-structure}
Let $F_{p,e}$ be as in \eqref{eq:F-general}, where $p>P_*$ is prime,
and put $\delta:=\gcd(e,m,n)$.  Let $G$ be a divisor of $F_{p,e}$ in
$\Z[x]$ satisfying $0<\deg G<n$.  Then there exists an integer $t$
with $1\le t\le\delta-1$ such that:

\begin{enumerate}
\item[\textup{(i)}]
$\displaystyle \deg G=\frac{tn}{\delta}$;

\item[\textup{(ii)}]
$\displaystyle
v_p(\lc(G))=\frac{te}{\delta},
\qquad
v_p(G(0))=
\begin{cases}
0, & m=0,\\[2mm]
\dfrac{te}{\delta}, & 0<m<n
\end{cases}$;

\item[\textup{(iii)}]
$\displaystyle
G(x)\equiv bx^{tm/\delta}\pmod p$
for some $b\in\Z$ with $p\nmid b$;

\item[\textup{(iv)}]
$\displaystyle
H(G)\le
p^{te/\delta}H(f)(1+R)^{tn/\delta}$.
\end{enumerate}
\end{theorem}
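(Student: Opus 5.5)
The plan is to combine the $p$-adic Newton polygon of $F_{p,e}$, via Lemma~\ref{lem:dumas}, with the Archimedean root bounds of Lemma~\ref{lem:annulus}. The threshold $p>P_*$ enters in two places: to pin down the Newton polygon of $F_{p,e}$, and to force the two endpoint valuations of a factor to agree. Write $f=\sum a_jx^j$, $g:=\deg G$, and $F_{p,e}=GK$ with $K\in\Z[x]$.

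\textbf{Step 1: the Newton polygon of $F_{p,e}$.} Since $p>P_*\ge R>\max\{|r|,H(f)\}$, we have $p\nmid r$, $p\nmid a_n$, and, when $m>0$, $p\nmid a_0$. Hence the point at abscissa $m$ has height $0$, the endpoints have height $e$, and every other point has height $\ge e$.
\begin{itemize}
\item For $m=0$ the polygon is the single edge $(0,0)$--$(n,e)$.
\item For $m>0$ it consists of the edge $(0,e)$--$(m,0)$, made of $d_1=\gcd(m,e)$ copies of $(m/d_1,-e/d_1)$, followed by the edge $(m,0)$--$(n,e)$, made of $d_2=\gcd(n-m,e)$ copies of $((n-m)/d_2,e/d_2)$.
\end{itemize}
We also have $F_{p,e}(0)\ne0$, so $G(0)K(0)\ne0$ and Lemma~\ref{lem:dumas} applies. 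Thus the polygon of $G$ consists of $s$ descending primitive vectors followed by $t'$ ascending ones. Moreover $F_{p,e}$ has a coefficient prime to $p$, so the minimum height of each factor's polygon is $0$.

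Consequently $v_p(G(0))=se/d_1$ and $v_p(\lc G)=t'e/d_2$, and this minimum is attained exactly at abscissa $k:=sm/d_1$ and nowhere else. Reducing mod $p$ then gives $G\equiv bx^{k}\pmod p$ with $p\nmid b$; this is (iii) once $k$ is identified. For $m=0$ the single-edge case gives $v_p(G(0))=0$ and $v_p(\lc G)=ge/n$ directly.

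\textbf{Step 2: balancing via Archimedean bounds (the main obstacle, needed only when $m>0$).} Put $\alpha=v_p(G(0))$, $\beta=v_p(\lc G)$, $\alpha'=v_p(K(0))$, $\beta'=v_p(\lc K)$. Then $\alpha+\alpha'=e=\beta+\beta'$ and $\lc G=p^\beta u$ with $u\mid a_n$.
\begin{itemize}
\item From $|G(0)|=|\lc G|\prod|\theta|$ and Lemma~\ref{lem:annulus} we get $p^\alpha\le|G(0)|\le p^\beta H(f)R^{g}$.
\item Writing $G(0)=p^\alpha w$ with $w\mid a_0$, we get $p^\beta\le|\lc G|\le|G(0)|R^{g}\le p^\alpha H(f)R^{g}$.
\end{itemize}
Hence $p^{|\alpha-\beta|}\le H(f)R^{g}$, and the same holds for $K$ with $n-g$ in place of $g$. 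Apply this to whichever factor has degree $\le\lfloor n/2\rfloor$. This gives $p^{|\alpha-\beta|}\le P_*<p$, so $\alpha=\beta$ for that factor, and then $\alpha'=\beta'$ for the other one as well.

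\textbf{Step 3: integrality and heights.} Let $A:=\alpha=\beta$. Then $s=Ad_1/e$ and $t'=Ad_2/e$, so
\[
k=\frac{Am}{e},\qquad g=\frac{Am}{e}+\frac{A(n-m)}{e}=\frac{An}{e}.
\]
The numbers $A$, $Am/e$ and $An/e$ are all integers, hence $A\delta/e\in\Z$. Writing $A=te/\delta$ gives (i), (ii), and $k=tm/\delta$ in (iii). Finally, $0<g<n$ forces $1\le t\le\delta-1$.

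For $m=0$, the single-edge case is handled the same way: $\beta=ge/n$ together with integrality of $g$ and $\beta$ gives $\beta=te/\delta$ with $\delta=\gcd(e,n)$.

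For (iv), write $G=\lc(G)\prod(x-\theta)$. Expanding the product and using $|\theta|\le R$ bounds each coefficient by $|\lc G|(1+R)^{g}$. Since $|\lc G|\le p^{te/\delta}|a_n|\le p^{te/\delta}H(f)$, this gives (iv).
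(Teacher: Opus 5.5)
Your proof is correct and follows essentially the same route as the paper: you pin down the Newton polygon of $F_{p,e}$ (one edge, or three vertices) and apply Dumas's theorem, you apply the Archimedean root bounds to the factor of degree at most $\lfloor n/2\rfloor$ to force $v_p(G(0))=v_p(\lc(G))$, and you bound the height with Vi\`ete's formulas. The only difference is cosmetic: you extract the integer $t$ by a B\'ezout argument from the integrality of $A$, $Am/e$, $An/e$, whereas the paper uses $\mu/d_-=\nu/d_+$ together with $\gcd(d_-,d_+)=\delta$.
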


\begin{proof}
Write $F_{p,e}=GK$ with $K\in\Z[x]$, and put $d:=\deg G$.

\smallskip
\noindent
\emph{Case 1: $m=0$.}
Since $p>R>\max\{|r|,H(f)\}$, we have
$v_p(F_{p,e}(0))=v_p(r+p^ef(0))=0$, whereas every nonzero coefficient
of positive degree has $p$-adic valuation $e$.  Hence the lower
$p$-adic Newton polygon of $F_{p,e}$ is the single edge
$(0,0)\longrightarrow(n,e)$.  Here $\delta=\gcd(e,n)$, so this edge
consists of $\delta$ copies of the primitive vector
$\left(n/\delta,e/\delta\right)$.
By Lemma~\ref{lem:dumas}, the Newton polygon of $G$ contains exactly $t$
copies of this vector for some integer $t$.  Since both $G$ and $K$ are
nonconstant, $1\le t\le\delta-1$.
Its horizontal width and total vertical rise therefore give
$d=tn/\delta$ and
$v_p(\lc(G))-v_p(G(0))=te/\delta$.
Since $v_p(F_{p,e}(0))=0$ and
$F_{p,e}(0)=G(0)K(0)$, both endpoint valuations at the left are
nonnegative and hence $v_p(G(0))=0$.
This proves parts \textup{(i)} and \textup{(ii)} in the present case.
The unique lowest point of the Newton polygon of
$G$ is its left endpoint, so every positive-degree coefficient of $G$ is
divisible by $p$: otherwise a point $(j,0)$ with $j>0$ would lie below
the positive-slope edge of the polygon.  Thus
$G(x)\equiv b\pmod p$ for some $b\in\Z$ with $p\nmid b$, which proves part \textup{(iii)}
because $m=0$.

\smallskip
\noindent
\emph{Case 2: $0<m<n$.}
We first prove the endpoint balance
\begin{equation}\label{eq:proper-factor-balance}
v_p(G(0))=v_p(\lc(G)).
\end{equation}
Assume initially that $d\le\lfloor n/2\rfloor$.  Put
$c:=v_p(G(0))$ and $s:=v_p(\lc(G))$, and write $G(0)=p^cu$ and
$\lc(G)=p^sv$, where $p\nmid uv$.  Since $p>H(f)$, we have
$v_p(f(0))=v_p(\lc(f))=0$.  From $G(0)K(0)=p^ef(0)$ and
$\lc(G)\lc(K)=p^e\lc(f)$, and since $p\nmid f(0)\lc(f)$, we may write
$K(0)=p^{e-c}u'$ and $\lc(K)=p^{e-s}v'$ with
$u',v'\in\Z$ not divisible by $p$.  Then $uu'=f(0)$ and
$vv'=\lc(f)$, so the $p$-free parts $u$ and $v$ divide $f(0)$ and
$\lc(f)$, respectively.  Hence $1\le|u|,|v|\le H(f)$.
Let $\theta_1,\ldots,\theta_d$ be the roots of $G$, counted with
multiplicity.  Lemma~\ref{lem:annulus} gives
\[
R^{-d}
\le
\left|\frac{G(0)}{\lc(G)}\right|
=
p^{c-s}\left|\frac uv\right|
\le
R^d.
\]
If $c>s$, then the middle term is at least
\[
\frac{p}{H(f)}
>
R^{\lfloor n/2\rfloor}
\ge R^d,
\]
a contradiction.  If $c<s$, it is at most
\[
\frac{H(f)}p
<
R^{-\lfloor n/2\rfloor}
\le R^{-d},
\]
again a contradiction.  Thus $c=s$.

If $d>n/2$, then $\deg K<n/2$, so the preceding argument applied to $K$
gives $v_p(K(0))=v_p(\lc(K))$.  Since
$v_p(F_{p,e}(0))=v_p(\lc(F_{p,e}))=e$, the same equality follows for
$G$.  This proves
\eqref{eq:proper-factor-balance} for every proper factor.

Now put $d_-:=\gcd(e,m)$ and $d_+:=\gcd(e,n-m)$.
Because $p>R>\max\{|r|,H(f)\}$, the lower $p$-adic Newton polygon of
$F_{p,e}$ has exactly the vertices $(0,e)$, $(m,0)$, and $(n,e)$.
Its primitive edge vectors consist of $d_-$ copies of
$\left(m/d_-,-e/d_-\right)$ and $d_+$ copies of
$\left((n-m)/d_+,e/d_+\right)$.
By Lemma~\ref{lem:dumas}, suppose that the Newton polygon of $G$ contains
$\mu$ copies of the first vector and $\nu$ copies of the second.  The
balance relation \eqref{eq:proper-factor-balance} says that its total
vertical displacement is zero, so $\mu/d_-=\nu/d_+$.  Moreover,
$\gcd(d_-,d_+)=\gcd(e,m,n-m)=\gcd(e,m,n)=\delta$.
Write $d_-=\delta u_-$ and $d_+=\delta u_+$, where
$\gcd(u_-,u_+)=1$.  Then $\mu u_+=\nu u_-$, so
$\mu=tu_-$ and $\nu=tu_+$ for some integer $t\ge1$.  Consequently,
$d=\mu m/d_-+\nu(n-m)/d_+=tn/\delta$.
Since $G$ is proper, $d<n$, and hence $1\le t\le\delta-1$.  This proves
part \textup{(i)}.

Since $F_{p,e}(x)\equiv rx^m\pmod p$ is nonzero, neither $G$ nor $K$
is divisible by $p$.  Thus the minimum
ordinate of the Newton polygon of $G$ is $0$.  Its negative-slope part
has total vertical drop $\mu e/d_-=te/\delta$, and the positive-slope
part has the same total rise.  Together with
\eqref{eq:proper-factor-balance}, this gives
$v_p(G(0))=v_p(\lc(G))=te/\delta$, proving part \textup{(ii)}.  The unique lowest vertex occurs at
abscissa $\mu m/d_-=tm/\delta$.
The coefficient corresponding to this vertex is a $p$-adic unit.  If a
coefficient at any other degree were also a $p$-adic unit, the associated
point of ordinate $0$ would lie below one of the two nonhorizontal edges
of the Newton polygon of $G$.  Hence precisely the coefficient at degree
$tm/\delta$ is a $p$-adic unit, while every other coefficient is divisible
by $p$.  This proves part \textup{(iii)}.

It remains to prove the height bound, in either case.  By
part \textup{(ii)}, write $\lc(G)=p^{te/\delta}w$, where $p\nmid w$.
Since $G\mid F_{p,e}$ in $\Z[x]$ and $p>H(f)$, the integer $w$ divides
$\lc(f)$; hence $|w|\le|\lc(f)|\le H(f)$.
Let $\theta_1,\ldots,\theta_d$ be the roots of $G$.  By
Lemma~\ref{lem:annulus}, $|\theta_i|\le R$.  Therefore, for
$0\le k\le d$, Vi\`ete's formulas give
\[
\begin{aligned}
\bigl|[x^{d-k}]G\bigr|
&\le
p^{te/\delta}|w|\binom dk R^k\\
&\le
p^{te/\delta}H(f)(1+R)^d.
\end{aligned}
\]
Using $d=tn/\delta$ proves part \textup{(iv)}.
\end{proof}

The structural theorem immediately yields the basic irreducibility criterion
for the general exponent.

\begin{corollary}[Large-prime irreducibility criterion]\label{cor:delta-one}
Let $F_{p,e}$ be as in \eqref{eq:F-general}, and let $p>P_*$ be prime.
If $\gcd(e,m,n)=1$, then $F_{p,e}$ is irreducible over $\Q$.
\end{corollary}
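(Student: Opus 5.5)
The plan is a direct argument by contradiction from Theorem~\ref{thm:factor-structure}. Suppose $\gcd(e,m,n)=1$ and $F_{p,e}$ is reducible over $\Q$ for some prime $p>P_*$. Then $F_{p,e}=UV$ with $U,V\in\Q[x]$ nonconstant, because $\deg F_{p,e}=n\ge2$ and the leading coefficient $p^e\lc(f)$ is nonzero. Note that reducibility in the sense of having a nonconstant proper factorization is the only relevant notion over $\Q$, since constants are units.

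First I apply Lemma~\ref{lem:integral} to get nonconstant $G,K\in\Z[x]$ with $F_{p,e}=GK$ and $\deg G=\deg U$. In particular $0<\deg G<n$, so $G$ is a divisor of $F_{p,e}$ in $\Z[x]$ of the kind treated in Theorem~\ref{thm:factor-structure}. When $m>0$, the standing assumption $f(0)\ne0$ is also in force, so the theorem's hypotheses hold.

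Next I invoke Theorem~\ref{thm:factor-structure}. It produces an integer $t$ with $1\le t\le\delta-1$, where $\delta=\gcd(e,m,n)$. Since $\delta=1$, this interval is empty, which is a contradiction. Hence $F_{p,e}$ admits no such factorization and is irreducible over $\Q$. One can equally phrase this through part (i): $\deg G=tn$ with $t\ge1$ would force $\deg G\ge n$, contradicting $\deg G<n$.

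I expect no real obstacle here. The only point to check is that the reduction from a rational factorization to an integral proper divisor keeps the degree strictly between $0$ and $n$, and Lemma~\ref{lem:integral} guarantees exactly that.
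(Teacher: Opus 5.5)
Your proposal is correct and follows the paper's proof: Lemma~\ref{lem:integral} turns a rational factorization into an integral one with $0<\deg G<n$, and Theorem~\ref{thm:factor-structure} then requires an integer $t$ with $1\le t\le\delta-1$, which cannot exist when $\delta=1$. Your alternative phrasing through part~(i), where $\deg G=tn\ge n$, is an equivalent way of reaching the same contradiction.
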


\begin{proof}
Suppose that $F_{p,e}$ is reducible over $\Q$.  By
Lemma~\ref{lem:integral}, there is a nontrivial factorization
$F_{p,e}=GK$ with $G,K\in\Z[x]$.
Then $0<\deg G<n$, so Theorem~\ref{thm:factor-structure} yields an
integer $t$ satisfying $1\le t\le\delta-1$, where
$\delta=\gcd(e,m,n)$.
If $\delta=1$, this is impossible.
\end{proof}

Corollary~\ref{cor:delta-one} may be compared with earlier large-prime
irreducibility criteria for sums of coprime polynomials.  For the family
\eqref{eq:F-general}, the criterion of Bonciocat, Bugeaud, Cipu, and
Mignotte~\cite{BBCM2013} applies, in particular, under
the condition $\gcd(e,n-m)=1$, whereas the criterion of
Bonciocat~\cite{Bonciocat2016} applies under $\gcd(e,n)=1$.
Each of these conditions implies $\gcd(e,m,n)=1$.
Indeed, any common divisor of $e$, $m$, and $n$ also divides $n-m$.
The converse, however, need not hold.  The following example gives an
infinite family for which neither of the preceding coprimality conditions
holds, while Corollary~\ref{cor:delta-one} still applies.

\begin{example}\label{ex:delta-one-family}
Let $k\ge0$, let $r\in\Z\setminus\{0\}$, and let $f\in\Z[x]$ have
degree $6k+3$ with $f(0)\ne0$.  Consider
\[
F_{p,6}(x)=rx^{6k+1}+p^6f(x).
\]
Since $\gcd(6,6k+1,6k+3)=1$,
Corollary~\ref{cor:delta-one} shows that $F_{p,6}$ is irreducible over
$\Q$ whenever
\[
p>P_*
=
H(f)\bigl(1+|r|+H(f)\bigr)^{3k+1}.
\]
On the other hand, $\gcd(6,6k+3)=3$ and
$\gcd\bigl(6,(6k+3)-(6k+1)\bigr)=\gcd(6,2)=2$.
Thus neither of the two preceding coprimality conditions is satisfied.
\end{example}

\section{Effective reducibility criteria via binary quadratic forms for
\texorpdfstring{$F_{p,2}$}{Fp2}}
\label{sec:e2}

We now specialize to the case $e=2$, so that
$F_{p,2}(x)=rx^m+p^2f(x)$.
By Corollary~\ref{cor:delta-one}, if at least one of $m$ and $n$ is odd,
then $F_{p,2}$ is irreducible over $\Q$ for every prime $p>P_*$.  Hence
the only case requiring further analysis is $2\mid m$ and $2\mid n$.
Accordingly, throughout this section we write $m=2M$ and $n=2N$, and
put $W:=x^M$ and $a:=\lc(f)$.

The proof of the criterion below will show that reducibility forces $f$
to admit a quadratic representation in a monic degree-$N$ polynomial and
$W=x^M$.  We therefore introduce the corresponding binary quadratic
forms.

For $b,c\in\Z$, put $\Phi_{b,c}(X,Y):=X^2-bXY+acY^2$.
Define $\mathcal Q_2(f;m)$ to be the set of all such forms for which
there exists a monic polynomial $h\in\Q[x]$ satisfying $\deg h=N$ and
$[x^M]h=0$, and such that $\Phi_{b,c}(ah,-W)=af$.  Equivalently,
$f=ah^2+bWh+cW^2$.
We call the elements of $\mathcal Q_2(f;m)$ the quadratic forms
associated with $f$ and $m$, and say that such a polynomial $h$
\emph{realizes} the corresponding form.  By construction,
$\mathcal Q_2(f;m)$ depends only on $f$ and $m$, and is independent of
$r$ and $p$.

Put
\begin{equation}\label{eq:C2}
C_2:=H(f)(1+R)^N+1
\end{equation}
and define
\begin{equation}\label{eq:P2}
P_2(r,f)
:=
\max\{2,P_*,2H(f)C_2\}.
\end{equation}

\begin{theorem}[Quadratic-form criterion]
\label{thm:e2-main}
Let $p>P_2(r,f)$ be prime.  Then $F_{p,2}$ is reducible over $\Q$
if and only if there exists $\Phi\in\mathcal Q_2(f;m)$ such that
$\Phi(z,p)=-ar$ for some $z\in\Z$.
\end{theorem}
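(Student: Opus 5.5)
The plan is to treat the two directions separately. Sufficiency is an explicit factorization, and necessity combines Theorem~\ref{thm:factor-structure} (with $\delta=2$) and a size argument that uses the threshold $P_2(r,f)$.

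\emph{Sufficiency.} Suppose $\Phi=\Phi_{b,c}\in\mathcal Q_2(f;m)$ is realized by $h$, and $\Phi(z,p)=-ar$.
\begin{enumerate}
\item[(a)] By homogeneity, $\Phi(aph,-pW)=p^2\Phi(ah,-W)=p^2af$.
\item[(b)] We also have $-arW^2=\Phi(z,p)W^2=\Phi(-zW,-pW)$.
\item[(c)] For a common second argument $Y$, one has $\Phi(X,Y)-\Phi(X',Y)=(X-X')(X+X'-bY)$.
\item[(d)] Combining (a)--(c) with $Y=-pW$ gives
\[
aF_{p,2}=\Phi(aph,-pW)-\Phi(-zW,-pW)=(aph+zW)\,(aph-zW+bpW).
\]
\end{enumerate}
Since $h$ is monic of degree $N$ and $M<N$, both factors have degree $N\ge1$. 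Hence $F_{p,2}$ is reducible over $\Q$.

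\emph{Necessity: setup.} Assume $F_{p,2}$ is reducible. By Lemma~\ref{lem:integral}, write $F_{p,2}=GK$ with $G,K\in\Z[x]$ nonconstant. Here $\delta=\gcd(2,2M,2N)=2$, so Theorem~\ref{thm:factor-structure} forces $t=1$. Hence $\deg G=\deg K=N$ and $v_p(\lc G)=v_p(\lc K)=1$. Moreover $G\equiv\beta x^M$ and $K\equiv\beta'x^M \pmod p$ with $p\nmid\beta\beta'$, and $H(G),H(K)\le pH(f)(1+R)^N$.

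\emph{Necessity: normalization.} Write $\lc G=pw$ with $w\mid a$, so $\lc K=pa/w$. Replace $G,K$ by $G'=(a/w)G$ and $K'=wK$. Then $aF_{p,2}=G'K'$, both factors have leading coefficient $pa$, and each is still congruent to a unit multiple of $W$ modulo $p$.

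\emph{Necessity: decomposition.} Define $g:=[x^M]G'$ and $k:=[x^M]K'$. Set $h:=(G'-gW)/(pa)$ and $h':=(K'-kW)/(pa)$. Both are monic of degree $N$ in $\Q[x]$ with vanishing $x^M$-coefficient, and
\[
p^2a^2hh'+pa(gh'+kh)W+(gk-ar)W^2=p^2af.
\]

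\emph{Necessity: reading off the form.} The goal is to prove $h=h'$. Granting this, set $b:=(g+k)/p$ and $c:=(gk-ar)/(ap^2)$. Then $f=ah^2+bWh+cW^2$, and $z:=g$ satisfies $z^2-bpz+acp^2=-gk+(gk-ar)=-ar$. So $\Phi_{b,c}\in\mathcal Q_2(f;m)$ provided $b,c\in\Z$.

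\emph{Main obstacle.} The central difficulty is establishing $h=h'$ together with $p\mid g+k$ and $ap^2\mid gk-ar$; this is where $p>2H(f)C_2$ enters. I would first compare the displayed identity modulo $p$ and modulo $p^2$. Every non-$x^M$ coefficient of $G',K'$ is divisible by $p$, so $ph, ph'$ have $p$-integral coefficients, apart from bounded denominators dividing powers of $a$. From this one extracts $gk\equiv ar$ modulo suitable powers of $p$, and hence congruences forcing $g\equiv -k$ in the required sense. For $h=h'$, I would compare coefficients of degree above $2M$, or equivalently use the symmetric roles of $G'$ and $K'$. The aim is to show that $pa(h-h')$ is an integer polynomial all of whose coefficients are divisible by $p$. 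It should also have height bounded by roughly $2H(f)(1+R)^N<p$, which follows from the height bound in Theorem~\ref{thm:factor-structure}(iv) and the definition of $C_2$. That forces $h=h'$. The integrality of $b$ and $c$ would then follow by the same "divisible by $p$ but smaller than $p$" comparison applied to $g+k$ and to the $W^2$-coefficient, using $|g|,|k|\le pH(f)C_2$.
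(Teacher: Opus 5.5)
Your sufficiency argument is correct and is the paper's factorization. Your reduction of necessity to the three facts $h=h'$, $p\mid g+k$, $ap^2\mid gk-ar$ is also sound: your $g$ and $k$ are the paper's $\beta U$ and $\alpha V$, and your $b,c,z$ coincide with the paper's. The gap is the step you label the main obstacle. It carries the whole content of the paper's ``coefficient alignment'', and the mechanics you indicate for it do not work.

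The polynomial $pa(h-h')=G'-K'-(g-k)W$ is divisible by $p$ for trivial reasons, but its height is only bounded by $2pH(f)^2(1+R)^N$. So no ``divisible by $p$ yet smaller than $p$'' conclusion applies to it. The right objects are $\tilde h:=ah=(G'-gW)/p$ and $\tilde h':=ah'=(K'-kW)/p$. These lie in $\Z[x]$ with height at most $H(f)^2(1+R)^N$, and what has to be shown is $\tilde h\equiv\tilde h'\pmod p$. Also, $g\equiv-k\pmod p$ does not follow from $gk\equiv ar$.

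The missing idea is to rewrite your identity as $p^2\tilde h\tilde h'+p(g\tilde h'+k\tilde h)W+(gk-ar)W^2=p^2af$. Then compare the coefficients of $x^{M+i}$ modulo $p^2$ for every $0\le i\le N$ with $i\ne M$. This includes $i<M$, i.e.\ degrees below $2M$, not only those above. The comparison gives $g[x^i]\tilde h'+k[x^i]\tilde h\equiv0\pmod p$.
\begin{enumerate}
\item[(1)] The case $i=N$ gives $a(g+k)\equiv0\pmod p$, hence $p\mid g+k$.
\item[(2)] Substituting back and using $p\nmid g$ gives $[x^i]\tilde h\equiv[x^i]\tilde h'\pmod p$.
\item[(3)] Then $|[x^i](\tilde h-\tilde h')|\le2H(f)^2(1+R)^N<2H(f)C_2<p$ forces $h=h'$.
\end{enumerate}

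Your proposed route to $b,c\in\Z$ also fails. A ``divisible by $p$ and smaller than $p$'' argument applied to $g+k$ would force $b=0$, which is false in general. For instance, take $f=x^4+x^3+x^2+x+1$, $r=1$ and $p=89>P_2(1,f)=34$. Then $x^2+89^2f=(89x^2+144x+89)(89x^2-55x+89)$, so $g+k=p$. Your bound $|g|,|k|\le pH(f)C_2$ does not make $|g+k|<p$ in any case.

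Integrality is pure divisibility. For $b$ it is $p\mid g+k$, obtained above. For $c$, note that $G'=(a/w)G$ and $K'=wK$ give $gk=a\,[x^M]G\,[x^M]K$, so $c=([x^M]G\,[x^M]K-r)/p^2$. Now look at the coefficient of $x^{2M}$ in $GK=F_{p,2}$. Every product $[x^i]G\,[x^{2M-i}]K$ with $i\ne M$ involves two multiples of $p$, so $[x^M]G\,[x^M]K\equiv r\pmod{p^2}$. With these repairs your argument becomes the paper's proof, in a slightly more symmetric normalization.
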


\begin{proof}
Suppose first that there exists $\Phi\in\mathcal Q_2(f;m)$ such that
$\Phi(z,p)=-ar$ for some $z\in\Z$.  Write
$\Phi(X,Y)=X^2-bXY+acY^2$ with $b,c\in\Z$, and choose a monic
polynomial $h\in\Q[x]$ realizing $\Phi$, so that $\deg h=N$,
$[x^M]h=0$, and $f=ah^2+bWh+cW^2$.  The relation
$\Phi(z,p)=-ar$ is equivalent to $z(bp-z)=acp^2+ar$.
Hence
\[
\begin{aligned}
aF_{p,2} 
& = p^2af+arx^m \\
& = p^2a\bigl(ah^2+bWh+cW^2\bigr)+arW^2 \\
& = p^2a^2h^2+p^2abWh+z(bp-z)W^2\\
& = (pah+zW)\bigl(pah+(bp-z)W\bigr).
\end{aligned}
\]
Since $a\ne0$ and $N>M$, the two factors on the right have degree $N$.
Thus $F_{p,2}$ is reducible over $\Q$.

Conversely, suppose that $F_{p,2}$ is reducible over $\Q$.

\smallskip
\noindent
\emph{Step 1: a half-degree factorization.}
Since $p>P_2(r,f)\ge P_*$, Lemma~\ref{lem:integral} gives a
nontrivial factorization
\begin{equation}\label{eq:q2-GK}
F_{p,2}=GK,
\qquad
G,K\in\Z[x].
\end{equation}
Here $\delta=\gcd(2,m,n)=2$.  Applying
Theorem~\ref{thm:factor-structure} to both proper factors forces $t=1$ and
gives $\deg G=\deg K=N$ and
$v_p(\lc(G))=v_p(\lc(K))=1$.  Moreover,
$G\equiv uW\pmod p$ and $K\equiv vW\pmod p$ for some integers $u,v$
with $p\nmid uv$, while $H(G),H(K)\le pH(f)(1+R)^N$.  Choose
representatives modulo $p$ such that $|u|,|v|<p/2$, and write
\begin{equation}\label{eq:q2-GK-decomposition}
G=pA+uW,
\qquad
K=pC+vW,
\end{equation}
with $A,C\in\Z[x]$ of degree $N$.

Put $\alpha:=\lc(A)$ and $\beta:=\lc(C)$.  Since
$\lc(G)=p\alpha$, $\lc(K)=p\beta$, and $\lc(F_{p,2})=p^2a$, we have
\begin{equation}\label{eq:q2-alphabeta}
\alpha\beta=a.
\end{equation}
In particular, $1\le|\alpha|,|\beta|\le H(f)$.

For $j\ne M$, \eqref{eq:q2-GK-decomposition} gives
$[x^j]A=[x^j]G/p$ and $[x^j]C=[x^j]K/p$, whereas at degree $M$,
$[x^M]A=([x^M]G-u)/p$ and $[x^M]C=([x^M]K-v)/p$.
Since $|u|,|v|<p/2$, it follows that
$H(A),H(C)<H(f)(1+R)^N+1/2$.
Hence
\begin{equation}\label{eq:q2-AC-height}
H(A)<C_2,
\qquad
H(C)<C_2.
\end{equation}

\smallskip
\noindent
\emph{Step 2: coefficient alignment.}
Expanding \eqref{eq:q2-GK-decomposition}, we obtain
$F_{p,2}=p^2AC+p(uC+vA)W+uvW^2$.
For $0\le i\le N$ with $i\ne M$, one has $M+i\ne2M=m$.
Hence the coefficient of $x^{M+i}$ in $F_{p,2}$ is divisible by $p^2$,
and comparison modulo $p^2$ gives
\begin{equation}\label{eq:q2-alignment-congruence}
u[x^i]C+v[x^i]A\equiv0\pmod p.
\end{equation}
Taking $i=N$ in \eqref{eq:q2-alignment-congruence} yields
$u\beta+v\alpha\equiv0\pmod p$.
Multiplying \eqref{eq:q2-alignment-congruence} by $\alpha$ and the last
congruence by $[x^i]A$, and then subtracting, gives
$u(\alpha[x^i]C-\beta[x^i]A)\equiv0\pmod p$.  Since $p\nmid u$,
$\alpha[x^i]C\equiv\beta[x^i]A\pmod p$ for $i\ne M$.
By \eqref{eq:q2-AC-height},
$|\alpha[x^i]C-\beta[x^i]A|<2H(f)C_2<p$, and hence
$\alpha[x^i]C=\beta[x^i]A$ for $i\ne M$.

Set $A_0:=A-[x^M]A\,W$ and $C_0:=C-[x^M]C\,W$.  Then
$H_0:=\beta A_0=\alpha C_0\in\Z[x]$.  Its leading coefficient is
$\alpha\beta=a$, so $h:=H_0/a$ is monic of degree $N$ and satisfies
$[x^M]h=0$.

\smallskip
\noindent
\emph{Step 3: reconstruction of the quadratic form.}
Put $U:=u+p[x^M]A$ and $V:=v+p[x^M]C$.  Then
$G=pA_0+UW$ and $K=pC_0+VW$.
Comparison of the coefficient of $x^{N+M}$ in $GK=F_{p,2}$ gives
$p\mid\beta U+\alpha V$, whereas comparison at degree $2M$, using
$[x^M]A_0=[x^M]C_0=0$, gives $UV\equiv r\pmod{p^2}$.  Hence
$b:=(\beta U+\alpha V)/p$ and $c:=(UV-r)/p^2$ are integers.

Since $H_0=ah$ and $a=\alpha\beta$,
\[
\begin{aligned}
aF_{p,2}
&=(pH_0+\beta UW)(pH_0+\alpha VW)\\
&=arW^2
  +p^2a\bigl(ah^2+bWh+cW^2\bigr).
\end{aligned}
\]
Comparing this with $aF_{p,2}=arW^2+p^2af$ gives
$f=ah^2+bWh+cW^2$.  Therefore
$\Phi(X,Y):=X^2-bXY+acY^2$ belongs to $\mathcal Q_2(f;m)$.

Finally, set $z:=\beta U\in\Z$.  Since
$bp=\beta U+\alpha V$, we have $bp-z=\alpha V$, and therefore
\[
\begin{aligned}
\Phi(z,p)
&=z^2-bpz+acp^2\\
&=\beta^2U^2-\beta U(\beta U+\alpha V)
  +a(UV-r)\\
&=-ar.
\end{aligned}
\]
Thus the required $\Phi\in\mathcal Q_2(f;m)$ and $z\in\Z$ exist.
\end{proof}

Theorem~\ref{thm:e2-main} reduces the reducibility problem for large primes
to the quadratic forms associated with $f$ and $m$.
In particular, if $\mathcal Q_2(f;m)=\varnothing$, then
$F_{p,2}$ is irreducible over $\Q$ for every prime
$p>P_2(r,f)$.  Thus the construction of the associated set is a finite
polynomial preprocessing step that may already settle the problem before
any Diophantine equation is considered.
The next proposition shows that at most two such forms can occur and
determines exactly when the upper bound is attained.

\begin{proposition}[Cardinality of the associated quadratic forms]
\label{prop:q2-cardinality}
The set $\mathcal Q_2(f;m)$ satisfies
$|\mathcal Q_2(f;m)|\le2$.
Equality holds if and only if
\begin{equation}\label{eq:q2-critical-f}
f(x)=ax^{2m}+\beta x^m+\gamma,
\qquad
a\gamma=\rho^2\ne0
\end{equation}
for some $\beta,\gamma,\rho\in\Z$.
In this case,
\begin{equation}\label{eq:q2-critical-forms}
\mathcal Q_2(f;m)
=
\left\{
X^2+a(\beta-2\rho)Y^2,\,
X^2+a(\beta+2\rho)Y^2
\right\}.
\end{equation}
\end{proposition}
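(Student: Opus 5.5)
The plan is to turn each associated form into a single rational number, and then show that two distinct such numbers force $f$ into the special shape \eqref{eq:q2-critical-f}.

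\emph{Step 1: parametrize forms by a discriminant-type quantity.} Given $\Phi_{b,c}\in\mathcal Q_2(f;m)$ realized by $h$, complete the square in $f=ah^2+bWh+cW^2$:
\[
af-DW^2=S^2,\qquad S:=ah+\tfrac b2 W,\qquad D:=ac-\tfrac{b^2}{4}\in\Q .
\]
Thus every form produces some $D\in\Q$ such that $af-Dx^m$ is the square of a polynomial $S\in\Q[x]$ of degree $N$ with $\lc(S)=a$.

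Conversely, $D$ determines the form. Indeed, $S$ is the unique square root of $af-Dx^m$ with leading coefficient $a$. Since $[x^M]h=0$, we get $b=\tfrac2a[x^M]S$ when $M<N$, and then $c=(D+b^2/4)/a$. Hence the assignment $\Phi\mapsto D$ is injective. It therefore suffices to count the admissible values of $D$.

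\emph{Step 2: two values of $D$ force the critical shape.} Suppose $D_1\ne D_2$ are admissible, with square roots $S_1,S_2$. Subtracting the two identities gives
\[
(S_1-S_2)(S_1+S_2)=(D_2-D_1)x^m .
\]
Here $S_1+S_2$ has degree $N$ and leading coefficient $2a$, while $S_1-S_2$ is nonzero of degree less than $N$. Since both factors divide a nonzero constant times $x^m$, both are monomials: $S_1+S_2=2ax^N$ and $S_1-S_2=\lambda x^k$, with $k+N=m$ and $0\le k<N$. In particular $m\ge N>0$, so $f(0)\ne0$.

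Now $S_1=ax^N+\tfrac\lambda2x^k$, and hence
\[
af=a^2x^{n}+(a\lambda+D_1)x^{m}+\tfrac{\lambda^2}{4}x^{2k}.
\]
Because $f(0)\ne0$ and $m\ne 2k$, the last term must be the constant term, so $k=0$. This gives $N=m$ and $n=2m$, and $f=ax^{2m}+\beta x^m+\gamma$ with $a\gamma=(\lambda/2)^2$.

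Since $a\gamma\in\Z$ is the square of a rational number, $\rho:=\lambda/2$ is an integer, and $\rho\ne0$. This proves one direction of the equality statement. The same computation shows that any admissible $S$ in this situation must equal $ax^m\pm\rho$, because the difference of any two admissible square roots is a constant $\pm2\rho$. So at most two values of $D$ occur, and with Step 1 this gives $|\mathcal Q_2(f;m)|\le2$ in general.

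\emph{Step 3: identify the two forms.} Assume \eqref{eq:q2-critical-f}, so $m=2M$ with $0<M<m=N$. Take $S=ax^m\pm\rho$. Then $[x^M]S=0$, so $b=0$ and $h=x^m\pm\rho/a$, which is monic of degree $N$ with $[x^M]h=0$.

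Expanding $af-acx^m=S^2$ gives $\beta=c\pm2\rho$, that is $c=\beta\mp2\rho\in\Z$. So both $\Phi_{0,c}$ actually lie in $\mathcal Q_2(f;m)$, and they are exactly the forms in \eqref{eq:q2-critical-forms}. They are distinct because $\rho\ne0$, which gives equality.

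\emph{Main obstacle.} I expect the delicate point to be the bookkeeping in Step 1 at the degree $M$. One must check that $b$ is genuinely recovered from $S$ through the condition $[x^M]h=0$, using $M<N$, which always holds since $m<n$. One must also keep the degree relation $k+N=m$ straight in Step 2, since this is what excludes $m=0$ and forces $n=2m$.
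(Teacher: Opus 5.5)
Your proof is correct, and it takes a different route to the upper bound than the paper does.

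\textbf{Where you follow the paper.} Your equality analysis in Step 2 is essentially the paper's argument. The paper works with $H=2ah+bW$ and $D=b^2-4ac$, which are your $2S$ and $-4D$. It first shows that equal discriminants force equal forms, which is your injectivity step. It then writes $(H_1-H_2)(H_1+H_2)$ as a product of monomials and uses $f(0)\ne0$ to force $N=2M$, which is your $k=0$.

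\textbf{Where you differ: the bound $|\mathcal Q_2(f;m)|\le2$.} The paper proves the bound first and independently, by recursive coefficient recovery for a realizing $h$:
\begin{itemize}
\item $u_{N-1},\dots,u_{M+1}$ are determined by the top coefficients of $f$;
\item $u_0$ is forced to be $0$ when $M=0$, and otherwise satisfies $u_0^2=f_0/a$, giving at most two choices;
\item $u_1,\dots,u_{M-1}$ then follow from $u_0$.
\end{itemize}
So at most two candidate $h$ exist, each with at most one $(b,c)$. You instead get the bound as a by-product of the two-root analysis. Once two admissible square roots $S_1\ne S$ exist, the same computation gives $S=2ax^m-S_1$, so no third one can occur.

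\textbf{What each route buys.} Your argument is shorter and handles $m=0$ automatically, since $k+N=m$ is impossible when $m=0$. It also proves the bound and the characterization in one stroke. The paper's argument is constructive. It is exactly what justifies the subsequent paragraph saying $\mathcal Q_2(f;m)$ can be computed from $f$ and $m$ by at most two final identity checks.

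\textbf{Minor points.}
\begin{itemize}
\item From $S=ah+\tfrac b2W$ and $[x^M]h=0$ you get $[x^M]S=b/2$. Hence $b=2[x^M]S$, not $\tfrac2a[x^M]S$. This is harmless, because you only use that $b$ is determined by $S$ and that $[x^M]S=0$ forces $b=0$.
\item In Step 3 the claim $M>0$ deserves a word: if $m=0$, then $f=ax^{2m}+\beta x^m+\gamma$ would be constant, contradicting $n\ge2$.
\end{itemize}
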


\begin{proof}
We first prove the upper bound $|\mathcal Q_2(f;m)|\le2$.  Write
$f(x)=\sum_{j=0}^{2N}f_jx^j$, where $a=f_{2N}$, and suppose that
$\Phi(X,Y)=X^2-bXY+acY^2\in\mathcal Q_2(f;m)$ is realized by
$h(x)=x^N+\sum_{j=0}^{N-1}u_jx^j$ with $u_M=0$.
Equivalently,
\begin{equation}\label{eq:q2-representation-for-cardinality}
f=ah^2+bWh+cW^2.
\end{equation}

For $j=N-1,N-2,\ldots,M+1$, the terms $bWh$ and $cW^2$ do not contribute to the coefficient of
$x^{N+j}$.  Hence
$f_{N+j}/a=2u_j+\sum_{i=j+1}^{N-1}u_i u_{N+j-i}$, and therefore
\begin{equation}\label{eq:q2-upper-recovery}
u_j
=
\frac12
\left(
\frac{f_{N+j}}a
-
\sum_{i=j+1}^{N-1}u_i u_{N+j-i}
\right).
\end{equation}
Thus $u_{N-1},u_{N-2},\ldots,u_{M+1}$ are uniquely determined,
successively from the coefficients of $f$.

If $M=0$, then the normalization gives $u_0=0$.  Hence there is at most one candidate $h$ satisfying the defining conditions.

Assume now that $M>0$.  Since neither $bWh$ nor $cW^2$ contributes to
the constant term of \eqref{eq:q2-representation-for-cardinality}, one has
\begin{equation}\label{eq:q2-u0-recovery}
u_0^2=\frac{f_0}{a}.
\end{equation}
Thus there are at most two rational possibilities for $u_0$.  Moreover,
$f(0)\ne0$, so $u_0\ne0$.  Once $u_0$ is fixed, comparison of the
coefficients of $x^j$, for $1\le j\le M-1$, gives
$f_j/a=2u_0u_j+\sum_{i=1}^{j-1}u_i u_{j-i}$, and hence
\begin{equation}\label{eq:q2-lower-recovery}
u_j
=
\frac{
f_j/a-\sum_{i=1}^{j-1}u_i u_{j-i}
}{
2u_0
}.
\end{equation}
Therefore $u_1,\ldots,u_{M-1}$ are uniquely determined once $u_0$ is
chosen.  Together with
$u_M=0$ and \eqref{eq:q2-upper-recovery}, this determines $h$
uniquely for each of the at most two choices of $u_0$.

For a fixed $h$, the coefficients $b$ and $c$ are also uniquely
determined.  Indeed, comparison in
\eqref{eq:q2-representation-for-cardinality} at degrees $N+M$ and $2M$
gives
\begin{equation}\label{eq:q2-bc-recovery}
b
=
f_{N+M}-a[x^{N+M}]h^2,
\qquad
c
=
f_{2M}-a[x^{2M}]h^2,
\end{equation}
where the normalization $[x^M]h=0$ is used in the second identity.
It follows that each admissible $h$ determines at most one element of
$\mathcal Q_2(f;m)$.  Hence $|\mathcal Q_2(f;m)|\le2$.

We next determine when the upper bound is attained.  Suppose that
$\mathcal Q_2(f;m)$ contains two distinct forms.  Then there are
polynomials $h,h'\in\Q[x]$, each satisfying the defining conditions,
and coefficients $b,c,b',c'\in\Z$ such that
\[
f=ah^2+bWh+cW^2
=
ah'^2+b'Wh'+c'W^2.
\]
Put $H_1=2ah+bW$ and $D=b^2-4ac$, and similarly put
$H_2=2ah'+b'W$ and $D'=b'^2-4ac'$.  Then
$4af=H_1^2-DW^2=H_2^2-D'W^2$, and hence
\begin{equation}\label{eq:q2-keyproduct}
(H_1-H_2)(H_1+H_2)
=
(D-D')W^2.
\end{equation}

If $D=D'$, then \eqref{eq:q2-keyproduct} gives $H_1=H_2$, since
$H_1+H_2$ has leading coefficient $4a\ne0$.  Thus
$2a(h-h')+(b-b')W=0$.  Taking the coefficient of $x^M$ and using
$[x^M]h=[x^M]h'=0$
gives $b=b'$, and then $h=h'$ and $c=c'$.  This contradicts the
assumption that the two associated forms are distinct.  Therefore
$D\ne D'$.

The right-hand side of \eqref{eq:q2-keyproduct} is now a nonzero scalar
multiple of $x^{2M}$.  Since the product of two nonzero polynomials in $\Q[x]$ is a monomial,
each factor must itself be a monomial.  Since $H_1+H_2$ has degree $N$
and leading coefficient $4a$, one has $H_1+H_2=4ax^N$ and
$H_1-H_2=\lambda x^{2M-N}$ for some $\lambda\in\Q^\times$.  In
particular, $N\le2M$.
The case $M=0$ has already been shown to give at most one form, so
$M>0$.  If $N<2M$, then both $H_1+H_2$ and $H_1-H_2$ have zero constant
term, and hence
$H_1(0)=H_2(0)=0$.  Since $W(0)=0$, the identity
$4af=H_1^2-DW^2$ would imply $f(0)=0$, contrary to the standing
hypothesis.  Thus $N=2M$, which is equivalent to $n=2m$.

Now $H_1-H_2$ is constant.  Comparing the coefficients of $x^M$ in
$H_1+H_2=4ax^N$ and in $H_1-H_2$ gives $b+b'=b-b'=0$, and therefore
$b=b'=0$.  It follows that $h+h'=2x^N$, whereas $h-h'$ is a nonzero
constant.  Since $N=m$, there exists $t\in\Q^\times$ such that
$h=x^m+t$ and $h'=x^m-t$.  Substitution into either representation gives
$f(x)=ax^{2m}+\beta x^m+\gamma$ for some $\beta,\gamma\in\Z$, with
$\gamma=at^2$.  Putting $\rho:=at$, we obtain $\rho^2=a\gamma$.
Since $\rho\in\Q$ and $\rho^2\in\Z$, writing $\rho=r_0/s_0$ in lowest
terms shows that $s_0=1$; hence $\rho\in\Z$.
Moreover, $t\ne0$, so $\rho\ne0$.  Thus
\eqref{eq:q2-critical-f} holds.

Conversely, suppose that \eqref{eq:q2-critical-f} holds.  Then
\[
\begin{aligned}
f(x)
&=
a\left(x^m+\frac{\rho}{a}\right)^2
+(\beta-2\rho)x^m,\\
f(x)
&=
a\left(x^m-\frac{\rho}{a}\right)^2
+(\beta+2\rho)x^m.
\end{aligned}
\]
Since $n=2m$, one has $N=m=2M$, and both polynomials
$x^m\pm\rho/a$ are monic of degree $N$ with zero $x^M$ coefficient.
Hence both satisfy the defining conditions and yield the two forms
displayed in \eqref{eq:q2-critical-forms}.  They are distinct because $\rho\ne0$.
Since we have already proved that
$|\mathcal Q_2(f;m)|\le2$, there can be no further associated form.
This completes the proof.
\end{proof}

\medskip
\noindent\textbf{Effective construction of $\mathcal Q_2(f;m)$.}
The proof of Proposition~\ref{prop:q2-cardinality} is constructive.
It determines, from $f$ and $m$, at most two candidate polynomials $h$;
for each candidate, the coefficients $b$ and $c$ are uniquely determined,
and the candidate is retained precisely when $b,c\in\Z$ and
$f=ah^2+bWh+cW^2$.
Hence $\mathcal Q_2(f;m)$ is effectively computable by at most two final
identity checks.  In particular, this polynomial preprocessing depends only
on $f$ and $m$, and is independent of $p$ and $r$.

It remains to translate the arithmetic representation in
Theorem~\ref{thm:e2-main} into a classical Diophantine form.

For an associated quadratic form
$\Phi(X,Y)=X^2-bXY+acY^2\in\mathcal Q_2(f;m)$, we write
$\Delta(\Phi):=b^2-4ac$ for its discriminant, and associate with $\Phi$
the Pell-type equation
\begin{equation}\label{eq:q2-associated-pell}
S^2-\Delta(\Phi)T^2=-4ar.
\end{equation}

\begin{corollary}[Pell-type reducibility criterion]
\label{cor:q2-pell}
Let $p>P_2(r,f)$ be prime.  Then $F_{p,2}$ is reducible over $\Q$
if and only if there exists $\Phi\in\mathcal Q_2(f;m)$ such that the associated Pell-type equation
\eqref{eq:q2-associated-pell} has an integral solution $(S,T)=(s,p)$
for some $s\in\Z$.
\end{corollary}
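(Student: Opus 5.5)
The plan is to derive the corollary directly from Theorem~\ref{thm:e2-main} by completing the square in the quadratic form. Fix $\Phi(X,Y)=X^2-bXY+acY^2\in\mathcal Q_2(f;m)$ with $b,c\in\Z$, and put $\Delta=\Delta(\Phi)=b^2-4ac$. The key identity is
\[
4\Phi(X,Y)=(2X-bY)^2-\Delta Y^2 .
\]
Hence, for $z\in\Z$, the equation $\Phi(z,p)=-ar$ is equivalent to
\[
(2z-bp)^2-\Delta p^2=-4ar .
\]
So a solution $z$ gives the integral solution $(S,T)=(2z-bp,p)$ of \eqref{eq:q2-associated-pell}. By Theorem~\ref{thm:e2-main}, this proves the ``only if'' direction.

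For the converse, suppose that $s\in\Z$ satisfies $s^2-\Delta p^2=-4ar$. We want $z:=(s+bp)/2\in\Z$; then reversing the identity gives $\Phi(z,p)=-ar$, and Theorem~\ref{thm:e2-main} yields reducibility. The only point to check is parity, namely $s\equiv bp\pmod 2$. Reducing the equation modulo $4$ gives $s^2\equiv\Delta p^2\pmod 4$. Since $\Delta\equiv b^2\pmod 4$, this becomes $s^2\equiv b^2p^2\pmod 4$. Squares modulo $4$ determine parity, because an even square is $\equiv0$ and an odd square is $\equiv1$. Therefore $s\equiv bp\pmod 2$, so $z$ is an integer.

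This parity step is the only place that needs care, and the discriminant congruence $\Delta\equiv b^2\pmod 4$ settles it. Note that $p>2$ is not even needed here, although $P_2(r,f)\ge2$ is available anyway.
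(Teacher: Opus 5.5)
Your proposal is correct and follows the paper's proof: you combine the completed-square identity $4\Phi(X,Y)=(2X-bY)^2-\Delta(\Phi)Y^2$ with Theorem~\ref{thm:e2-main}, and you settle the integrality of $z=(s+bp)/2$ by reducing modulo $4$ using $\Delta(\Phi)\equiv b^2\pmod 4$. Your side remark also holds, because $\Delta(\Phi)p^2\equiv (bp)^2\pmod 4$ directly, so the paper's appeal to $p$ being odd is not needed for the parity step.
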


\begin{proof}
Let $\Phi(X,Y)=X^2-bXY+acY^2\in\mathcal Q_2(f;m)$.  By the definition
of $\Delta(\Phi)$,
$4\Phi(X,Y)=(2X-bY)^2-\Delta(\Phi)Y^2$.

By Theorem~\ref{thm:e2-main}, $F_{p,2}$ is reducible over $\Q$
if and only if there exist $\Phi\in\mathcal Q_2(f;m)$ and $z\in\Z$
such that $\Phi(z,p)=-ar$.  For such a pair, setting $s=2z-bp$ gives
$s^2-\Delta(\Phi)p^2=-4ar$,
so \eqref{eq:q2-associated-pell} has the integral solution $(s,p)$.

Conversely, suppose that \eqref{eq:q2-associated-pell} has an integral
solution $(s,p)$ for some $\Phi\in\mathcal Q_2(f;m)$.  Since
$\Delta(\Phi)\equiv b^2\pmod4$ and $p>P_2(r,f)\ge2$ is odd, reduction
modulo $4$ gives $s\equiv bp\pmod2$.  Hence
$z:=(s+bp)/2\in\Z$.  The quadratic identity above then yields
$\Phi(z,p)=-ar$.
Theorem~\ref{thm:e2-main} therefore implies that $F_{p,2}$ is reducible.
\end{proof}

By Proposition~\ref{prop:q2-cardinality}, at most two associated
quadratic forms can occur.  Consequently, once $\mathcal Q_2(f;m)$ has
been constructed, the criterion above reduces the reducibility problem for
large primes to at most two fixed Pell-type equations.

The arithmetic nature of \eqref{eq:q2-associated-pell} depends on its
discriminant.  If $\Delta(\Phi)<0$, the equation is positive definite and
has only finitely many integral solutions.  If $\Delta(\Phi)$ is a square,
including the degenerate case $\Delta(\Phi)=0$, the equation reduces to an
elementary factorization or square condition.  The genuinely Pellian case
is $\Delta(\Phi)>0$ with $\Delta(\Phi)$ nonsquare.  It may have infinitely
many integral solutions, but determining which of their second
coordinates are prime is a separate arithmetic problem; our criterion
decides reducibility for each prescribed prime $p$.

\begin{example}[Empty and maximal associated sets]\label{ex:q2-sets}
Let $m=2$, $n=4$, and $r=1$, so $M=1$, $N=2$, and $W=x$.

First take $f(x)=x^4+x+1$.
Any normalized realizing polynomial would have the form
$h=x^2+u$ with $u\in\Q$.  In a representation $f=h^2+bWh+cW^2$,
comparison at degree $3$ gives $b=0$, after which the coefficient of
$x$ on the right is $0$, a contradiction.  Hence
$\mathcal Q_2(f;2)=\varnothing$.  Here $H(f)=1$, $R=3$, and
$P_2(1,f)=34$, so Theorem~\ref{thm:e2-main} gives the concrete family
$x^2+p^2(x^4+x+1)$ of irreducible polynomials over $\Q$ for every prime
$p>34$.

Now take $f(x)=x^4+1$.  The two normalized polynomials
$h_\pm=x^2\pm1$ give
$\mathcal Q_2(f;2)=\{X^2-2Y^2,\ X^2+2Y^2\}$, so the upper bound in
Proposition~\ref{prop:q2-cardinality} is attained.
Again $P_2(1,f)=34$.  The prime $p=5741$ and the integer $z=8119$
satisfy $z^2-2p^2=-1$.
Consequently
\[
\begin{aligned}
x^2+5741^2(x^4+1)
={}&\bigl(5741(x^2+1)+8119x\bigr)\\
&\mathrel{}\cdot
\bigl(5741(x^2+1)-8119x\bigr),
\end{aligned}
\]
which illustrates both directions of Theorem~\ref{thm:e2-main} above
the explicit threshold.
\end{example}

\section{Effective reducibility criteria via binary cubic forms for
\texorpdfstring{$F_{p,3}$}{Fp3}}
\label{sec:e3}

We now specialize to the case $e=3$, so that
$F_{p,3}(x)=rx^m+p^3f(x)$.
By Corollary~\ref{cor:delta-one}, if at least one of $m$ and $n$ is not
divisible by $3$, then $F_{p,3}$ is irreducible over $\Q$ for every
prime $p>P_*$.  Hence the only case requiring further analysis is
$3\mid m$ and $3\mid n$.  Accordingly, throughout this section we write
$m=3M$ and $n=3N$, and put $W:=x^M$ and $A:=\lc(f)$.

The proof of the criterion below will show that reducibility forces $f$
to admit a cubic representation in a monic degree-$N$ polynomial and
$W=x^M$.  We therefore introduce the corresponding binary cubic forms.

For $b,c,d\in\Q$, put
\begin{equation}\label{eq:Phi3}
\Phi_{b,c,d}(X,Y)
:=
X^3-bX^2Y+AcXY^2-A^2dY^3.
\end{equation}
Define $\mathcal Q_3(f;m)$ to be the set of all such forms for which
there exists a monic polynomial $h\in\Q[x]$ satisfying
\begin{equation}\label{eq:q3-hnorm}
\deg h=N,
\qquad
[x^M]h=0,
\end{equation}
The same polynomial must also satisfy
\begin{equation}\label{eq:q3-polyrep}
\Phi_{b,c,d}(Ah,-W)=A^2f.
\end{equation}
Equivalently, $f=Ah^3+bWh^2+cW^2h+dW^3$.
We call the elements of $\mathcal Q_3(f;m)$ the cubic forms associated
with $f$ and $m$, and say that such a polynomial $h$ \emph{realizes}
the corresponding form.  By construction, $\mathcal Q_3(f;m)$ depends
only on $f$ and $m$, and is independent of $r$ and $p$.

Put
\begin{equation}\label{eq:C3}
C_3:=(1+R)^N
\end{equation}
and define
\begin{equation}\label{eq:P3}
P_3(r,f)
:=\max\left\{
3,\ P_*,\
H(f)^4\bigl((2N+4)C_3^2+(N+2)C_3^3\bigr)
\right\}.
\end{equation}

The cubic necessity argument requires one additional lifting step that
has no analogue in the quadratic case.  We isolate the two elementary
tools needed for this step: cancellation modulo a prime power and a
bounded lifting from a modular span to an exact rational span.

\begin{lemma}[Cancellation modulo a prime power]\label{lem:padic-cancel}
Let $P,Q\in\Z[x]$, let $p$ be prime, and let $k\ge1$.  If
$P\not\equiv0\pmod p$ and $PQ\equiv0\pmod{p^k}$, then
$Q\equiv0\pmod{p^k}$.
\end{lemma}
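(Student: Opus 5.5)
The plan is to argue by induction on $k$, using only two facts: $\mathbb{F}_p[x]$ is an integral domain, and we can divide by $p$ inside $\Z[x]$ once all coefficients are known to be divisible by $p$. Throughout, write $\bar P,\bar Q\in\mathbb{F}_p[x]$ for the reductions modulo $p$.

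\emph{Base case $k=1$.} Here $PQ\equiv0\pmod p$ means $\bar P\bar Q=0$ in $\mathbb{F}_p[x]$. The hypothesis $P\not\equiv0\pmod p$ says $\bar P\neq0$. Since $\mathbb{F}_p[x]$ has no zero divisors, $\bar Q=0$, that is, $Q\equiv0\pmod p$. (Equivalently, this is Gauss's lemma for the prime $p$.)

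\emph{Inductive step.} Assume the claim holds for $k-1\ge1$, and suppose $PQ\equiv0\pmod{p^k}$. In particular $PQ\equiv0\pmod p$, so the base case gives $Q=pQ_1$ for some $Q_1\in\Z[x]$. Then $pPQ_1\equiv0\pmod{p^k}$. Dividing every coefficient by $p$, which is legitimate in $\Z$, gives $PQ_1\equiv0\pmod{p^{k-1}}$. The induction hypothesis now yields $Q_1\equiv0\pmod{p^{k-1}}$, and hence $Q=pQ_1\equiv0\pmod{p^k}$.

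There is no real obstacle. The only points needing care are that congruence of polynomials is meant coefficientwise, and that the division by $p$ in the inductive step happens in $\Z[x]$ rather than modulo $p^k$. As an alternative, one could write $Q=p^jQ'$ with $\bar Q'\neq0$ and $j<k$; then $\bar P\bar Q'\neq0$ forces $v_p$ of some coefficient of $PQ$ to equal $j<k$, which is a contradiction.
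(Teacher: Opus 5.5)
Your proof is correct and follows the paper's argument step for step. Both argue by induction on $k$: the base case uses that $\mathbb F_p[x]$ is an integral domain, and the inductive step writes $Q=pQ_1$, divides by $p$ in $\Z[x]$, and applies the hypothesis at level $k-1$.
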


\begin{proof}
We argue by induction on $k$.  For $k=1$, the assertion follows because
$\mathbb F_p[x]$ is an integral domain.  Let $k>1$.  Reducing modulo $p$
gives $\overline P\,\overline Q=0$ in $\mathbb F_p[x]$.  Since
$\overline P\ne0$, one has
$\overline Q=0$, so $Q=pQ_1$ for some $Q_1\in\Z[x]$.  Then
$pPQ_1\equiv0\pmod{p^k}$, and hence
$PQ_1\equiv0\pmod{p^{k-1}}$.
The induction hypothesis gives $Q_1\equiv0\pmod{p^{k-1}}$, and therefore
$Q\equiv0\pmod{p^k}$.
\end{proof}

\begin{lemma}[Bounded lifting in the cubic case]\label{lem:cubic-lifting}
Let $p$ be a prime.  Let $0\le M<N$, put $W=x^M$, and let
$H_0,K\in\Z[x]$ satisfy $\deg H_0=N$, $\deg K=2N$,
$\alpha:=\lc(H_0)\ne0$, $[x^M]H_0=0$, and $p\nmid\alpha$.
Suppose that, modulo $p^3$,
\begin{equation}\label{eq:cubic-modspan}
K\in\operatorname{span}_{\Z/p^3\Z}\{W^2,WH_0,H_0^2\}.
\end{equation}
Here and below, a polynomial congruence modulo $p^3$ means coefficientwise
congruence in $(\Z/p^3\Z)[x]$.
Let $\mathcal H,\mathcal C\ge1$ and assume $|\alpha|\le\mathcal H$,
$H(H_0)\le\mathcal H\mathcal C$,
$H(K)\le p^2\mathcal H\mathcal C^2$, and
$|\lc(K)|\le p^2\mathcal H$.
If
\begin{equation}\label{eq:cubic-lift-threshold}
p>
\mathcal H^4\bigl((2N+4)\mathcal C^2+(N+2)\mathcal C^3\bigr),
\end{equation}
then $K\in\operatorname{span}_{\Q}\{W^2,WH_0,H_0^2\}$.
\end{lemma}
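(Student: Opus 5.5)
The plan is to identify explicit \emph{rational} candidates for the three span coefficients directly from $K$ and $H_0$, form the corresponding integral error polynomial, and show that it is both divisible by $p^3$ and smaller than $p^3$ in height, hence zero. The degree structure makes the coefficients triangular. Since $2M<M+N<2N$ and $\deg H_0=N$, the monomials $W^2$, $WH_0$, $H_0^2$ have distinct degrees $2M$, $M+N$, $2N$, and the hypothesis $[x^M]H_0=0$ gives $[x^{2M}](WH_0)=0$. So if $K\equiv\lambda W^2+\mu WH_0+\nu H_0^2\pmod{p^3}$, then:
\begin{itemize}
\item comparing coefficients at $x^{2N}$ gives $\lc(K)\equiv\nu\alpha^2$;
\item comparing at $x^{N+M}$ gives $[x^{N+M}]K\equiv\mu\alpha+\nu[x^{N+M}]H_0^2$;
\item comparing at $x^{2M}$ gives $[x^{2M}]K\equiv\lambda+\nu[x^{2M}]H_0^2$.
\end{itemize}
Because $p\nmid\alpha$, these congruences determine $\nu,\mu,\lambda$ modulo $p^3$, and they suggest rational lifts:
\[
\nu_0=\frac{\lc(K)}{\alpha^2},\qquad
\mu_0=\frac{[x^{N+M}]K-\nu_0[x^{N+M}]H_0^2}{\alpha},\qquad
\lambda_0=[x^{2M}]K-\nu_0[x^{2M}]H_0^2 .
\]

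Clearing denominators, I would consider the integral polynomial
\[
E:=\alpha^3K-\alpha\lc(K)H_0^2-\bigl(\alpha^2[x^{N+M}]K-\lc(K)[x^{N+M}]H_0^2\bigr)WH_0-\bigl(\alpha^3[x^{2M}]K-\alpha\lc(K)[x^{2M}]H_0^2\bigr)W^2 ,
\]
which equals $\alpha^3\bigl(K-\nu_0H_0^2-\mu_0WH_0-\lambda_0W^2\bigr)$; the exact power of $\alpha$ may need adjusting to make every coefficient integral. Substituting the modular representation of $K$ and using $\lc(K)\equiv\nu\alpha^2$, a direct computation should give $E\equiv0\pmod{p^3}$. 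Lemma~\ref{lem:padic-cancel} is available if it is cleaner to argue via a product with a unit such as $\alpha$; in any case the computation is pure bookkeeping modulo $p^3$.

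The key step, and the main obstacle, is the Archimedean bound $H(E)<p^3$; once it holds, $E=0$, and dividing by $\alpha^3\ne0$ puts $K$ in $\operatorname{span}_\Q\{W^2,WH_0,H_0^2\}$. I would bound the four groups of terms separately:
\begin{itemize}
\item $|\alpha|^3H(K)\le p^2\mathcal H^4\mathcal C^2$;
\item $H(H_0^2)\le(N+1)\mathcal H^2\mathcal C^2$, so $|\alpha\lc(K)|H(H_0^2)\le p^2(N+1)\mathcal H^4\mathcal C^2$;
\item the $WH_0$ coefficient is at most $p^2\mathcal H^3\mathcal C^2+p^2\mathcal H(N+1)\mathcal H^2\mathcal C^2$, and multiplying by $H(WH_0)=H(H_0)\le\mathcal H\mathcal C$ contributes about $p^2(N+2)\mathcal H^4\mathcal C^3$;
\item the $W^2$ coefficient contributes about $p^2(N+2)\mathcal H^4\mathcal C^2$.
\end{itemize}
Summing gives $H(E)\le p^2\mathcal H^4\bigl((2N+4)\mathcal C^2+(N+2)\mathcal C^3\bigr)$, which is less than $p^3$ exactly by \eqref{eq:cubic-lift-threshold}; the threshold's shape is consistent with this term-by-term count. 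The delicate part is choosing the $\alpha$-scaling so that every coefficient is integral while the total power of $\mathcal H$ stays at $4$. If $\alpha^3$ does not suffice, I would first try normalizing by $\alpha^2$ and exploiting $|\lc(K)|\le p^2\mathcal H$ more carefully in the $H_0^2$ terms.
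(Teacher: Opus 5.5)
Your proposal is correct and is essentially the paper's proof. Your $E$ coincides term by term with the paper's error polynomial $\alpha^3K-\mathsf AW^2-\mathsf BWH_0-\mathsf GH_0^2$; the congruence $E\equiv0\pmod{p^3}$ comes from the same triangular comparison at degrees $2N$, $N+M$, $2M$; and your four height estimates are the paper's and sum exactly to the hypothesized threshold. The closing hedging about adjusting the power of $\alpha$ is unnecessary, since your explicit formula already has integer coefficients with the $\alpha^3$ scaling and the bounds are exact rather than approximate.
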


\begin{proof}
Write $H_0^2=\sum_jq_jx^j$ and $K=\sum_jk_jx^j$.
Because $N>M$ and $[x^M]H_0=0$, the three basis elements have
distinct leading degrees $2M<N+M<2N$.  Define
$\mathsf G:=\alpha k_{2N}$,
$\mathsf B:=\alpha^2k_{N+M}-k_{2N}q_{N+M}$, and
$\mathsf A:=\alpha^3k_{2M}-\alpha k_{2N}q_{2M}$, and put
$E:=\alpha^3K-\mathsf A W^2-\mathsf B WH_0-\mathsf G H_0^2$.
By \eqref{eq:cubic-modspan}, there exist residue classes
$a_0,a_1,a_2\in\Z/p^3\Z$ such that
$K\equiv a_0W^2+a_1WH_0+a_2H_0^2\pmod{p^3}$.
Let $A_0=\alpha^3a_0$, $B_0=\alpha^3a_1$, and
$G_0=\alpha^3a_2$ in $\Z/p^3\Z$.  Then
\begin{equation}\label{eq:cubic-triangular-representation}
\alpha^3K\equiv A_0W^2+B_0WH_0+G_0H_0^2\pmod{p^3}.
\end{equation}
Comparison at degree $2N$ in
\eqref{eq:cubic-triangular-representation} gives
$\alpha^3k_{2N}\equiv G_0\alpha^2\pmod{p^3}$.
Since $p\nmid\alpha$, cancellation of $\alpha^2$ yields
$G_0\equiv\alpha k_{2N}=\mathsf G\pmod{p^3}$.
At degree $N+M$, only $WH_0$ and $H_0^2$ contribute, and hence
$\alpha^3k_{N+M}\equiv B_0\alpha+G_0q_{N+M}\pmod{p^3}$.
Substituting the preceding congruence and cancelling $\alpha$ gives
\[
B_0\equiv
\alpha^2k_{N+M}-k_{2N}q_{N+M}
=\mathsf B
\pmod{p^3}.
\]
Finally, $[x^M]H_0=0$ implies
$[x^{2M}](WH_0)=0$.  Comparison at degree $2M$ therefore gives
$\alpha^3k_{2M}\equiv A_0+G_0q_{2M}\pmod{p^3}$,
and consequently
\[
A_0\equiv
\alpha^3k_{2M}-\alpha k_{2N}q_{2M}
=\mathsf A
\pmod{p^3}.
\]
Thus the three coefficients in
\eqref{eq:cubic-triangular-representation} are congruent to
$\mathsf A,\mathsf B,\mathsf G$, respectively, and
$E\equiv0\pmod{p^3}$.

Furthermore, $H(H_0^2)\le(N+1)\mathcal H^2\mathcal C^2$, because every
coefficient of $H_0^2$ is a sum of at most $N+1$
products of coefficients of $H_0$.  The assumed bounds now give
$|\mathsf G|\le p^2\mathcal H^2$,
\[
\begin{aligned}
|\mathsf B|
&\le |\alpha|^2H(K)
   +|\lc(K)|H(H_0^2)\\
&\le p^2(N+2)\mathcal H^3\mathcal C^2,
\end{aligned}
\]
\[
\begin{aligned}
|\mathsf A|
&\le |\alpha|^3H(K)
   +|\alpha|\,|\lc(K)|H(H_0^2)\\
&\le p^2(N+2)\mathcal H^4\mathcal C^2.
\end{aligned}
\]
Consequently, the four summands in $E$ satisfy
\[
\begin{aligned}
H(\alpha^3K)&\le p^2\mathcal H^4\mathcal C^2,\qquad
H(\mathsf A W^2)\le p^2(N+2)\mathcal H^4\mathcal C^2,\\
H(\mathsf B WH_0)&\le p^2(N+2)\mathcal H^4\mathcal C^3,\qquad
H(\mathsf G H_0^2)\le p^2(N+1)\mathcal H^4\mathcal C^2.
\end{aligned}
\]
Adding these estimates yields
\[
H(E)
\le
p^2\mathcal H^4
\bigl((2N+4)\mathcal C^2+(N+2)\mathcal C^3\bigr)
<p^3.
\]
Every coefficient of $E$ is therefore both divisible by $p^3$ and smaller
than $p^3$ in absolute value.  Hence $E=0$, proving the assertion.
\end{proof}

\begin{theorem}[Cubic-form criterion]
\label{thm:e3-main}
Let $p>P_3(r,f)$ be prime.  Then $F_{p,3}$ is reducible over $\Q$ if and
only if there exists $\Phi\in\mathcal Q_3(f;m)$ such that
\begin{equation}\label{eq:q3-arithmetic}
\Phi(z,p)=A^2r
\end{equation}
for some $z\in\Z$.
\end{theorem}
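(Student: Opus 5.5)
The plan is to prove sufficiency by an explicit factorization, and necessity by using Theorem~\ref{thm:factor-structure} together with Lemmas~\ref{lem:padic-cancel} and~\ref{lem:cubic-lifting} to force the cubic representation of $f$.

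\emph{Sufficiency.} Suppose $\Phi=\Phi_{b,c,d}\in\mathcal Q_3(f;m)$ is realized by $h$ and $\Phi(z,p)=A^2r$ with $z\in\Z$. Since $\Phi$ is homogeneous of degree $3$, \eqref{eq:q3-polyrep} gives $p^3A^2f=\Phi(pAh,-pW)$. Homogeneity also gives $\Phi(-zW,-pW)=-W^3\Phi(z,p)=-A^2rW^3$. Hence
\[
A^2F_{p,3}=\Phi(pAh,-pW)-\Phi(-zW,-pW).
\]
As a polynomial in $X$, $\Phi(X,Y)-\Phi(X_0,Y)$ is divisible by $X-X_0$. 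Therefore $pAh+zW$ divides $A^2F_{p,3}$ in $\Q[x]$. This divisor has degree $N$, because $N>M$ and $A\neq0$, and $0<N<n$, so $F_{p,3}$ is reducible.

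\emph{Necessity: setting up the lifting.} By Lemma~\ref{lem:integral}, write $F_{p,3}=GK$ with $G,K\in\Z[x]$ nonconstant. Here $\delta=3$, so Theorem~\ref{thm:factor-structure} gives $t\in\{1,2\}$ for each factor; after swapping we may take $\deg G=N$ and $\deg K=2N$.

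Theorem~\ref{thm:factor-structure} then gives $v_p(\lc G)=1$, $v_p(\lc K)=2$, $G\equiv uW\pmod p$ with $p\nmid u$, and the height bounds $H(G)\le pH(f)C_3$ and $H(K)\le p^2H(f)C_3^2$. Write $G=pH_0+UW$ with $[x^M]H_0=0$. For $j\neq M$ we have $[x^j]H_0=[x^j]G/p$, and $U$ absorbs the $x^M$ coefficient of $G$, so $U\equiv u\pmod p$. Thus $\alpha:=\lc(H_0)=\lc(G)/p$ divides $A$, so $p\nmid\alpha$, $|\alpha|\le H(f)$, and $H(H_0)\le H(f)C_3$. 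Also $|\lc K|\le p^2H(f)$, since $\lc(K)/p^2$ divides $A$.

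Now put $Q:=U^2W^2-pUWH_0+p^2H_0^2$. Then $GQ=p^3H_0^3+U^3W^3\equiv U^3W^3\pmod{p^3}$, and $GK\equiv rW^3\pmod{p^3}$. Hence $G(U^3K-rQ)\equiv0\pmod{p^3}$. Since $G\not\equiv0\pmod p$, Lemma~\ref{lem:padic-cancel} gives $U^3K\equiv rQ\pmod{p^3}$. Because $U$ is invertible modulo $p$, this places $K$ in $\operatorname{span}_{\Z/p^3\Z}\{W^2,WH_0,H_0^2\}$. This is exactly hypothesis \eqref{eq:cubic-modspan}, with $\mathcal H=H(f)$ and $\mathcal C=C_3$. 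Since $p>P_3(r,f)$, Lemma~\ref{lem:cubic-lifting} yields $K=\lambda_0W^2+\lambda_1WH_0+\lambda_2H_0^2$ with $\lambda_i\in\Q$.

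\emph{Necessity: reading off the form.} Expanding $GK=(pH_0+UW)(\lambda_0W^2+\lambda_1WH_0+\lambda_2H_0^2)$ writes $F_{p,3}$ as a cubic form in $H_0$ and $W$. Subtract $rW^3$, divide by $p^3$, and set $h:=H_0/\alpha$, which is monic of degree $N$ with $[x^M]h=0$. This expresses $f=Ah^3+bWh^2+cW^2h+dW^3$ with $b,c,d\in\Q$; the $h^3$ coefficient must equal $A=\lc(f)$ by comparing leading terms, which matches the required normalization. So $\Phi:=\Phi_{b,c,d}\in\mathcal Q_3(f;m)$.

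Finally, set $z:=(A/\alpha)U\in\Z$, so that $G=(\alpha/A)(pAh+zW)$. By the identity from the sufficiency part, $A^2F_{p,3}=\Phi(pAh,-pW)+A^2rW^3$. Since $pAh+zW$ divides $F_{p,3}$, we can reduce modulo it, i.e.\ replace $pAh$ by $-zW$. This gives $W^3\bigl(A^2r-\Phi(z,p)\bigr)=0$, hence $\Phi(z,p)=A^2r$.

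\emph{Main obstacle.} I expect the hardest part to be checking the hypotheses of Lemma~\ref{lem:cubic-lifting}: the height normalizations of $H_0$ and $K$ (including the $x^M$ adjustment), and the bound on $\lc K$. The divisibility bookkeeping showing $z\in\Z$ is minor by comparison.
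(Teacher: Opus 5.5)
Your route is essentially the paper's, step for step. It uses the same reduction to $\deg G=N$, $\deg K=2N$ via Lemma~\ref{lem:integral} and Theorem~\ref{thm:factor-structure}, and the same normalization $G=pH_0+UW$; your $U=[x^M]G$ is the paper's $s$. The cancellation step is the same: your $Q$ is $U^3$ times the paper's cofactor with inverses. You then apply Lemma~\ref{lem:cubic-lifting} with $\mathcal H=H(f)$ and $\mathcal C=C_3$, and choose $z=\beta U$, exactly as the paper does. Your homogeneity argument for sufficiency is a neat way to derive the paper's explicit factorization: $(\Phi(X,Y)-\Phi(X_0,Y))/(X-X_0)$, evaluated at $X=pAh$, $X_0=-zW$, $Y=-pW$, is exactly the paper's quadratic cofactor.

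There is one real gap, in the last line of the necessity argument. Reducing modulo $pAh+zW$ does not show that the polynomial $W^3(A^2r-\Phi(z,p))$ is zero. It only shows that $pAh+zW$ divides $(A^2r-\Phi(z,p))x^{3M}$, i.e.\ the relation holds in $\Q[x]/(pAh+zW)$, where $W$ could a priori be a zero divisor. A degree count does not rescue this, since $N\le 3M$ is allowed (e.g.\ $M=1$, $N=2$). You must therefore rule out that $pAh+zW$ divides a nonzero constant multiple of $x^{3M}$.

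The paper handles the two cases separately. If $M=0$, then $W=1$, and a nonconstant polynomial cannot divide a nonzero constant. If $M>0$, evaluating the representation at $0$ gives $f(0)=Ah(0)^3\ne0$, hence $h(0)\ne0$ and $\gcd(pAh+zW,W)=1$.

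In your notation there are two equally short alternatives.
\begin{enumerate}
\item Since $p\nmid U$, we have $z=\beta U\ne0$; since $[x^M]h=0$, the polynomial $pAh+zW$ has nonzero coefficients at both $x^N$ and $x^M$. So it is not of the form $\lambda x^k$, and these are the only divisors of $cx^{3M}$ in $\Q[x]$.
\item Verify $\Phi(z,p)=A^2r$ directly. Substitute the explicit values of $b,c,d$ in terms of $\lambda_0,\lambda_1,\lambda_2,U,\alpha,\beta$, using $\lambda_2\alpha^2=p^2\beta$; all other terms cancel.
\end{enumerate}
With either addition the proof is complete.
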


\begin{proof}
Suppose first that $\Phi\in\mathcal Q_3(f;m)$ and that $z\in\Z$ satisfies
\eqref{eq:q3-arithmetic}.  Write $\Phi=\Phi_{b,c,d}$ and choose a
polynomial $h$ realizing $\Phi$.  Then
$f=Ah^3+bWh^2+cW^2h+dW^3$ and
$z^3-pbz^2+p^2Acz-p^3A^2d=A^2r$.
A direct expansion gives
\[
\begin{aligned}
&(pAh+zW)
\Bigl(
 p^2A^2h^2+pA(pb-z)Wh
 +(z^2-pbz+p^2Ac)W^2
\Bigr)\\
&\qquad
=p^3A^2\bigl(Ah^3+bWh^2+cW^2h+dW^3\bigr)+A^2rW^3\\
&\qquad
=p^3A^2f+A^2rx^m
=A^2F_{p,3}.
\end{aligned}
\]
Since $A\ne0$ and $N>M$, the two displayed factors have degrees $N$ and
$2N$, respectively.  Dividing one factor by the nonzero scalar $A^2$
therefore gives a nontrivial factorization of $F_{p,3}$ over $\Q$.

Conversely, suppose that $F_{p,3}$ is reducible.

\smallskip
\noindent
\emph{Step 1: the degree-$N$ factor and its normalization.}
Since $p>P_3(r,f)\ge P_*$, Lemma~\ref{lem:integral} gives a
nontrivial factorization in $\Z[x]$.  Here
$\delta=\gcd(3,m,n)=3$.  Theorem~\ref{thm:factor-structure} shows that a
proper factor has parameter $t=1$ or $2$; after interchanging the two
factors if necessary, we may therefore write
\begin{equation}\label{eq:q3-GK}
F_{p,3}=GK,
\qquad
G,K\in\Z[x],
\qquad
\deg G=N,
\qquad
\deg K=2N.
\end{equation}
Applying Theorem~\ref{thm:factor-structure} to $G$ and $K$ also gives
$v_p(\lc(G))=1$ and $v_p(\lc(K))=2$.  Moreover,
$G\equiv uW\pmod p$ and $K\equiv vW^2\pmod p$ for some integers $u,v$
with $p\nmid uv$, while $H(G)\le pH(f)C_3$ and
$H(K)\le p^2H(f)C_3^2$.  Write $G=pA_0+uW$, and set
$c_0:=[x^M]A_0$, $H_0:=A_0-c_0W$, and $s:=u+pc_0$.
Then
\begin{equation}\label{eq:q3-Gnormalized}
G=pH_0+sW,
\qquad
[x^M]H_0=0.
\end{equation}
Let $\alpha:=\lc(H_0)$ and write $\lc(K)=p^2\beta$.
Since $N>M$, one has $\lc(G)=p\alpha$, so the valuation information above
gives $p\nmid\alpha\beta$.  Comparing leading coefficients in
\eqref{eq:q3-GK} yields
\begin{equation}\label{eq:q3-alphabeta}
\alpha\beta=A,
\qquad
1\le|\alpha|,|\beta|\le H(f).
\end{equation}
Put $h:=H_0/\alpha$.  Then $h$ is monic of degree $N$ and satisfies
\eqref{eq:q3-hnorm}.

\smallskip
\noindent
\emph{Step 2: lifting the complementary factor.}
Since \eqref{eq:q3-Gnormalized} gives
$[x^j]H_0=[x^j]G/p$ for $j\ne M$, while $[x^M]H_0=0$, we obtain
\begin{equation}\label{eq:q3-heightbounds}
H(H_0)\le H(f)C_3,
\qquad
H(K)\le p^2H(f)C_3^2.
\end{equation}
Moreover, $|\lc(K)|=p^2|\beta|\le p^2H(f)$.
Reducing \eqref{eq:q3-GK} modulo $p^3$ gives
$(pH_0+sW)K\equiv rW^3\pmod{p^3}$.
Since $s\equiv u\not\equiv0\pmod p$, the class of $s$ is invertible
modulo $p^3$.  A direct multiplication gives
\[
(pH_0+sW)
\bigl(s^{-1}W^2-ps^{-2}WH_0+p^2s^{-3}H_0^2\bigr)
\equiv W^3\pmod{p^3},
\]
where the inverses of $s$ are taken in $\Z/p^3\Z$.  Hence
\[
(pH_0+sW)
\left[
K-r\bigl(s^{-1}W^2-ps^{-2}WH_0+p^2s^{-3}H_0^2\bigr)
\right]
\equiv0\pmod{p^3}.
\]
Since $pH_0+sW\equiv sW\not\equiv0\pmod p$,
Lemma~\ref{lem:padic-cancel} yields
\begin{equation}\label{eq:q3-Kmodp3}
K\equiv
r\bigl(s^{-1}W^2-ps^{-2}WH_0+p^2s^{-3}H_0^2\bigr)
\pmod{p^3}.
\end{equation}
Thus the modular span hypothesis of Lemma~\ref{lem:cubic-lifting} holds.
Apply that lemma with $\mathcal H=H(f)$ and $\mathcal C=C_3$.
By \eqref{eq:P3}, its size condition is satisfied, and therefore
\begin{equation}\label{eq:q3-Kexact}
K=q_0W^2+q_1Wh+q_2h^2
\end{equation}
for some $q_0,q_1,q_2\in\Q$.  Since $h$ is monic and $N>M$, the term
$q_2h^2$ is the unique contribution of degree $2N$, so
$q_2=\lc(K)=p^2\beta$.

\smallskip
\noindent
\emph{Step 3: reconstruction of the binary cubic form.}
Since $H_0=\alpha h$, \eqref{eq:q3-Gnormalized} becomes
$G=p\alpha h+sW$.
Substituting this and \eqref{eq:q3-Kexact} into $GK=F_{p,3}$ gives
\[
\begin{aligned}
GK={}&p^3Ah^3
 +(p\alpha q_1+sp^2\beta)Wh^2\\
&+(p\alpha q_0+sq_1)W^2h
 +sq_0W^3.
\end{aligned}
\]
The four basis elements $h^3$, $Wh^2$, $W^2h$, and $W^3$ have pairwise
distinct degrees.  Hence, defining
\begin{equation}\label{eq:q3-bcd}
b:=\frac{p\alpha q_1+sp^2\beta}{p^3},
\qquad
c:=\frac{p\alpha q_0+sq_1}{p^3},
\qquad
d:=\frac{sq_0-r}{p^3},
\end{equation}
comparison with
$F_{p,3}=rW^3+p^3f$ yields
$f=Ah^3+bWh^2+cW^2h+dW^3$.  Thus the form
$\Phi(X,Y):=X^3-bX^2Y+AcXY^2-A^2dY^3$ belongs to
$\mathcal Q_3(f;m)$.

It remains to prove the arithmetic representation.  Multiplying the
degree-$N$ factor by $\beta$, put $\beta G=pAh+zW$, where
$z:=\beta s\in\Z$.
A direct expansion, using the representation above, gives
\begin{equation}\label{eq:q3-divisibility-identity}
\begin{aligned}
&A^2F_{p,3}
-(pAh+zW)
\Bigl(
 p^2A^2h^2+pA(pb-z)Wh
 +(z^2-pbz+p^2Ac)W^2
\Bigr)\\
&\qquad
=W^3\bigl(A^2r-\Phi(z,p)\bigr).
\end{aligned}
\end{equation}
Since $A=\alpha\beta$, the polynomial
$pAh+zW=\beta G$ divides $A^2F_{p,3}=A^2GK$ and therefore divides the
left-hand side of \eqref{eq:q3-divisibility-identity}.  Hence it also
divides the right-hand side.  If $M=0$, then $W=1$, so a nonconstant polynomial cannot
divide a nonzero constant.  If $M>0$, the representation gives
$f(0)=Ah(0)^3\ne0$, so $h(0)\ne0$ and therefore
$\gcd(pAh+zW,W)=1$.
Again a nonconstant polynomial coprime to $W$ cannot divide a nonzero
constant multiple of $W^3$.  Thus $\Phi(z,p)=A^2r$, which proves
\eqref{eq:q3-arithmetic}.

\end{proof}

\begin{remark}[Monic case]\label{rem:q3-monic}
If $f$ is monic and $F_{p,3}$ is reducible, the necessity proof above
produces the associated form with $h\in\Z[x]$ and $b,c,d\in\Z$.
Indeed, $A=1$ and \eqref{eq:q3-alphabeta} give
$\alpha,\beta\in\{\pm1\}$, so $h\in\Z[x]$.  Since
$2N+M>N+2M>3M$,
comparison successively at degrees $2N+M$, $N+2M$, and $3M$ in
$f=h^3+bWh^2+cW^2h+dW^3$
shows that $b,c,d\in\Z$.
\end{remark}

Theorem~\ref{thm:e3-main} reduces the reducibility problem for large primes to
the cubic forms associated with $f$ and $m$.  In contrast with the
quadratic case, there is never more than one such form.  Notice also that
if $\mathcal Q_3(f;m)=\varnothing$, then Theorem~\ref{thm:e3-main}
immediately gives the irreducibility of $F_{p,3}$ for every prime
$p>P_3(r,f)$.

\begin{proposition}[Cardinality of the associated cubic forms]
\label{prop:q3-cardinality}
The set $\mathcal Q_3(f;m)$ satisfies
$|\mathcal Q_3(f;m)|\le1$.
More precisely, whenever $\mathcal Q_3(f;m)$ is nonempty, both the associated form and the polynomial realizing it are unique.
\end{proposition}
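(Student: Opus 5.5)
The plan is to mirror the recovery argument in the proof of Proposition~\ref{prop:q2-cardinality}, but exploit the fact that cube roots over $\Q$ are unique. Suppose $\Phi_{b,c,d}\in\mathcal Q_3(f;m)$ is realized by $h(x)=x^N+\sum_{j=0}^{N-1}u_jx^j$ with $u_M=0$, so that
\[
f=Ah^3+bWh^2+cW^2h+dW^3 .
\]
Write $f=\sum_j f_jx^j$. The first step is to show that $h$ is determined by $f$ and $m$.

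\emph{Upper coefficients.} For $M<j\le N-1$, the coefficient of $x^{2N+j}$ in $f$ receives no contribution from $bWh^2$, whose degree is $2N+M$, nor from the lower terms. It equals $A\bigl(3u_j+(\text{polynomial in }u_{j+1},\dots,u_{N-1})\bigr)$. Hence $u_{N-1},\dots,u_{M+1}$ are recovered successively, exactly as in \eqref{eq:q2-upper-recovery}, now dividing by $3$ instead of $2$.

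\emph{Case $M=0$.} Here the normalization $u_0=0$ already fixes $h$.

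\emph{Case $M>0$.} The constant term gives $u_0^3=f_0/A$. This has a unique rational solution, and $u_0\ne0$ because $f(0)\ne0$. This is the key difference from the quadratic case, where $\pm u_0$ gave two candidates. For $1\le j\le M-1$, the coefficient of $x^j$ in $f$ involves only $Ah^3$, since $W$-terms have degree at least $M$. It equals $A\bigl(3u_0^2u_j+(\text{terms in }u_0,\dots,u_{j-1})\bigr)$. Since $u_0\ne0$, the coefficients $u_1,\dots,u_{M-1}$ are determined successively. Together with $u_M=0$, the polynomial $h$ is unique.

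The second step recovers $b,c,d$ once $h$ is fixed. The polynomials $Wh^2$, $W^2h$, $W^3$ have leading degrees $2N+M>N+2M>3M$. Taking the coefficient at degree $2N+M$ in $f-Ah^3$ determines $b$. Taking the coefficient at degree $N+2M$ in $f-Ah^3-bWh^2$ then determines $c$, and degree $3M$ similarly determines $d$. This is triangular recovery, as in Remark~\ref{rem:q3-monic}. Therefore at most one triple $(b,c,d)$, and hence at most one form $\Phi_{b,c,d}$, is associated with the unique $h$.

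Finally, the form determines $(b,c,d)$, since $A\ne0$. Any two elements of $\mathcal Q_3(f;m)$ therefore come from realizing polynomials that both equal the unique $h$, and so they coincide. The same argument shows that the realizing polynomial is unique, even a priori across different forms.

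The main obstacle is the bookkeeping in the upper-coefficient recursion. The claim that $x^{2N+j}$ for $j>M$ is untouched by $bWh^2$ and the lower terms follows from degree counting. The claim that the coefficient is linear in $u_j$ with coefficient $3A$ modulo higher $u_i$ needs care with the cube expansion, but it parallels the quadratic case closely. The low-degree recursion similarly needs $j<M$, so that no $W$-term contributes.
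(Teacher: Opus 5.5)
Your proof is correct, but for this proposition it takes a different route from the paper's. The paper's own proof is a difference-and-degree argument, and your coefficient recursion appears in the paper only afterwards, as the effective-construction paragraph.

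\textbf{Your route.} You prove uniqueness by explicit coefficient recovery, mirroring the upper-bound part of Proposition~\ref{prop:q2-cardinality}:
\begin{itemize}
\item for $M<j\le N-1$, the coefficient of $x^{2N+j}$ is $A\bigl(3u_j+R_j(u_{j+1},\ldots,u_{N-1})\bigr)$;
\item when $M>0$, the constant term gives $u_0^3=f(0)/A$;
\item for $1\le j\le M-1$, the coefficient of $x^j$ is $A\bigl(3u_0^2u_j+S_j(u_0,\ldots,u_{j-1})\bigr)$;
\item $b,c,d$ then follow triangularly from the degrees $2N+M>N+2M>3M$.
\end{itemize}

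\textbf{The paper's route.} The paper subtracts two representations and factors the left-hand side as $A(h-\tilde h)(h^2+h\tilde h+\tilde h^2)$, writing $\ell:=\deg(h-\tilde h)$.
\begin{itemize}
\item For $M=0$, the normalization gives $\ell\ge1$. So the left side has degree $\ell+2N>2N$, while the right side has degree at most $2N$, a contradiction.
\item For $M>0$, the factor $h^2+h\tilde h+\tilde h^2$ has constant term $3h(0)^2\ne0$, so it is coprime to $x$. Hence $x^M\mid h-\tilde h$, giving $\ell\ge M$. Comparing degrees gives $\ell\le M$, so $\ell=M$. This contradicts $[x^M]h=[x^M]\tilde h=0$.
\end{itemize}

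\textbf{What each buys.} The paper's argument is shorter and never expands $h^3$ or checks the triangular shape of the cube expansion. However, it only rules out a second realizing polynomial and never produces the candidate $h$. That is why the paper must describe the recursion separately to justify computability. Your approach does the bookkeeping up front, which you carried out correctly. In return it proves uniqueness and effective computability at once. It also isolates exactly where the cubic case departs from the quadratic one: injectivity of $u_0\mapsto u_0^3$ on $\Q$ replaces the two choices $\pm u_0$.
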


\begin{proof}
Suppose that the two representations
\[
\begin{aligned}
f&=Ah^3+bWh^2+cW^2h+dW^3,\\
f&=A\widetilde h^{\,3}
 +\widetilde bW\widetilde h^{\,2}
 +\widetilde cW^2\widetilde h
 +\widetilde dW^3
\end{aligned}
\]
satisfy the defining conditions.

If $M=0$, then $h(0)=\widetilde h(0)=0$.  If $h\ne\widetilde h$, put
$\ell:=\deg(h-\widetilde h)$.
Since both polynomials are monic of degree $N$ and have zero constant
term, one has
$1\le\ell<N$.  Subtracting the two representations gives
\[
\begin{aligned}
A(h-\widetilde h)(h^2+h\widetilde h+\widetilde h^2)
={}&\widetilde b\widetilde h^2-bh^2\\
&+\widetilde c\widetilde h-ch+(\widetilde d-d).
\end{aligned}
\]
The left-hand side has degree $\ell+2N>2N$, whereas the right-hand side
has degree at most $2N$, a contradiction.  Hence $h=\widetilde h$.
The distinct degrees $2N$, $N$, and $0$ then give
$b=\widetilde b$, $c=\widetilde c$, and $d=\widetilde d$.

Assume now that $M>0$.  Since $f(0)\ne0$, evaluation at $x=0$ gives
$h(0)=\widetilde h(0)\ne0$.
Indeed, $Ah(0)^3=A\widetilde h(0)^3=f(0)$, and the map
$u\mapsto u^3$ is injective on $\Q$.
Subtracting the two representations yields
\[
\begin{aligned}
A(h-\widetilde h)(h^2+h\widetilde h+\widetilde h^2)
={}&W(\widetilde b\widetilde h^2-bh^2)\\
&+W^2(\widetilde c\widetilde h-ch)
 +(\widetilde d-d)W^3.
\end{aligned}
\]
The second factor on the left has nonzero constant term
$3h(0)^2$ and is therefore coprime to $x$.  Since the right-hand side is
divisible by $W=x^M$, it follows that $W\mid h-\widetilde h$.  If
$h\ne\widetilde h$ and $\ell=\deg(h-\widetilde h)$, then
$\ell\ge M$.  The left-hand side has degree $\ell+2N$, while the
right-hand side has degree at most $M+2N$.  Thus $\ell\le M$, and hence
$\ell=M$.  But a nonzero polynomial divisible by $x^M$ and of degree $M$
has a nonzero $x^M$ coefficient, contradicting
$[x^M]h=[x^M]\widetilde h=0$.  Therefore $h=\widetilde h$.  Since
$2N+M>N+2M>3M$,
comparison of the three remaining leading degrees gives
$b=\widetilde b$, $c=\widetilde c$, and $d=\widetilde d$.
Thus the realizing polynomial, and hence the associated binary cubic
form, is unique.
\end{proof}

\medskip
\noindent\textbf{Effective construction of $\mathcal Q_3(f;m)$.}
The associated set is also effectively computable from $f$ and $m$.
Writing $h(x)=x^N+\sum_{j=0}^{N-1}u_jx^j$ with $u_M=0$,
comparison at degree $2N+j$, for $j=N-1,\ldots,M+1$, has the form
$f_{2N+j}/A=3u_j+R_j$,
where $R_j$ depends only on $u_{j+1},\ldots,u_{N-1}$.  Hence
$u_{N-1},\ldots,u_{M+1}$ are recovered successively.  If $M=0$, the
normalization gives $u_0=0$.  If $M>0$, the constant term gives
$u_0^3=f(0)/A$, which has at most one rational solution.  Since
$f(0)\ne0$, any such
solution satisfies $u_0\ne0$.  Once $u_0$ is fixed, comparison at degree
$j$, for $1\le j\le M-1$, has the form
$f_j/A=3u_0^2u_j+S_j$,
where $S_j$ depends only on $u_0,\ldots,u_{j-1}$; hence
$u_1,\ldots,u_{M-1}$ are recovered successively.  Once $h$ is fixed, the
coefficients $b,c,d$ are uniquely determined by comparison at degrees
$2N+M$, $N+2M$, and $3M$, followed by one final identity check.  Thus
$\mathcal Q_3(f;m)$ is obtained from a single candidate polynomial,
independently of $p$ and $r$.

Finally, the unique associated cubic form, when it exists, leads to a
fixed binary cubic Diophantine equation.  After clearing denominators, this
becomes a Thue equation whenever the resulting integral binary cubic form is
irreducible.  Let $\Phi\in\mathcal Q_3(f;m)$, and choose a positive
integer $q$ such that $\Psi(X,Y):=q\Phi(X,Y)\in\Z[X,Y]$.
Multiplication by a nonzero rational scalar does not affect
irreducibility over $\Q$.  Thus the irreducibility hypothesis below is
independent of the choice of the denominator-clearing integer $q$.

\begin{corollary}[Finiteness via Thue's theorem]\label{cor:q3-thue}
Assume that $\mathcal Q_3(f;m)$ is nonempty and that the integral binary
cubic form $\Psi$ defined above is irreducible over $\Q$.  Then only
finitely many primes $p>P_3(r,f)$ make $F_{p,3}$ reducible over $\Q$.
\end{corollary}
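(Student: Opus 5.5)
By Theorem~\ref{thm:e3-main}, every prime $p>P_3(r,f)$ for which $F_{p,3}$ is reducible admits some $z\in\Z$ with $\Phi(z,p)=A^2r$. Here $\Phi$ is the unique element of $\mathcal Q_3(f;m)$, unique by Proposition~\ref{prop:q3-cardinality}. I will multiply this relation by the fixed positive integer $q$. Then each such prime produces an integral solution $(X,Y)=(z,p)$ of the single fixed equation
\[
\Psi(X,Y)=qA^2r .
\]
The form $\Psi$, the integer $q$ and the right-hand side depend only on $f$, $m$ and $r$, not on $p$. The right-hand side is nonzero, because $A=\lc(f)\ne0$ and $r\ne0$.

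The key step is to invoke Thue's theorem. Since $\Psi\in\Z[X,Y]$ is a binary cubic form that is irreducible over $\Q$, it has degree $3\ge3$. For every nonzero integer $k$, the equation $\Psi(X,Y)=k$ therefore has only finitely many solutions $(X,Y)\in\Z^2$. I need to check the hypotheses. The form is homogeneous of degree exactly $3$, since the coefficient of $X^3$ in $\Phi$ is $1$, so in $\Psi$ it is $q\ne0$. Irreducibility is assumed, and the assumption does not depend on $q$, as already remarked in the paper. The right-hand side $k=qA^2r$ is nonzero.

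The set of solutions is finite, so the set of their second coordinates $Y$ is finite. The map $p\mapsto(z,p)$ sends each reducible prime into this solution set and recovers $p$ as the second coordinate. Different primes therefore give different solutions, and only finitely many primes $p>P_3(r,f)$ make $F_{p,3}$ reducible.

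I do not expect any real obstacle. The only point needing care is that Thue's theorem requires irreducibility, or at least a form with no repeated linear factors that is not a power of a linear or quadratic form, together with a nonzero right-hand side. Both are guaranteed here. The proof is ineffective in general, but effective versions via Baker's method give explicit bounds if desired.
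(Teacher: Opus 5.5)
Your proposal is correct and follows the paper's proof: Theorem~\ref{thm:e3-main} turns each reducible prime $p>P_3(r,f)$ into an integral solution $(z,p)$ of the fixed equation $\Psi(X,Y)=qA^2r$, whose right-hand side is nonzero, and Thue's theorem then leaves only finitely many possible second coordinates $p$. The extra details you supply are the points the paper's short argument leaves implicit: the uniqueness of $\Phi$ via Proposition~\ref{prop:q3-cardinality}, and $qA^2r\ne0$ because $A\ne0$ and $r\ne0$.
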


\begin{proof}
By Theorem~\ref{thm:e3-main}, reducibility gives an integer $z$ such that
$\Phi(z,p)=A^2r$, and hence $\Psi(z,p)=qA^2r$.
The right-hand side is a fixed nonzero integer.  Since $\Psi$ is an
irreducible binary cubic form, Thue's theorem~\cite{Thue1909} gives only
finitely many integer solutions $(z,p)$.  Therefore only finitely many
primes $p$ can occur.
\end{proof}

The conclusion of Corollary~\ref{cor:q3-thue} is qualitative: it does not
assert that the largest reducible prime is bounded by the explicit
threshold $P_3(r,f)$.  Effective upper bounds for the solutions of the
resulting Thue equation require additional Diophantine estimates that are
not part of the present paper.

By Proposition~\ref{prop:q3-cardinality}, at most one associated cubic
form can occur.  Thus, once $\mathcal Q_3(f;m)$ has been constructed, the
reducibility problem for large primes is governed by at most one fixed
binary cubic equation.

\begin{example}[Reducible and irreducible associated cubic forms]
\label{ex:q3-forms}
Let $m=3$, $n=6$, $r=1$, $M=1$, $N=2$, $W=x$, and
$h=x^2+1$.

First take $f(x)=h(x)^3=(x^2+1)^3$.
Then the unique associated form is $\Phi(X,Y)=X^3$, and
$\Phi(1,p)=1$ for every prime $p$.  In fact, for every $p$ one has the
identity
\[
\begin{aligned}
x^3+p^3(x^2+1)^3
={}&\bigl(x+p(x^2+1)\bigr)\\
&\mathrel{}\cdot
\bigl(x^2-px(x^2+1)+p^2(x^2+1)^2\bigr).
\end{aligned}
\]
Thus a reducible associated cubic form can lead to infinitely many
reducible primes; the irreducibility hypothesis in
Corollary~\ref{cor:q3-thue} is essential.  The identity also illustrates
that the explicit threshold in Theorem~\ref{thm:e3-main} is a sufficient
uniform threshold and is not expected to be sharp in every family.

For a contrasting example, take
$f(x)=h^3-W^2h+W^3=x^6+2x^4+x^3+2x^2+1$.  The unique associated form is
$\Phi(X,Y)=X^3-XY^2-Y^3$.
It is irreducible over $\Q$, since the cubic polynomial
$\Phi(X,1)=X^3-X-1$ has no rational root.  Here $H(f)=2$, $R=4$,
$C_3=25$, and $P_3(1,f)=1{,}080{,}000$.
Corollary~\ref{cor:q3-thue} therefore shows that only finitely many
primes $p>1{,}080{,}000$ can make
$x^3+p^3(x^6+2x^4+x^3+2x^2+1)$ reducible over $\Q$.
\end{example}

\section{Concluding remarks on higher exponents}
\label{sec:conclusion}

The quadratic and cubic arguments are both driven by the proper-factor
structure established in Theorem~\ref{thm:factor-structure}.  For
arbitrary $e\ge1$ and sufficiently large primes $p$, every nonconstant
proper integral factor $G$ is governed by
$\delta=\gcd(e,m,n)$ and a parameter $t$ satisfying
$1\le t\le\delta-1$.  More precisely,
its degree is $tn/\delta$, its endpoint valuations are explicit, its
reduction modulo $p$ is the single monomial
$bx^{tm/\delta}$, and its height is bounded by an explicit multiple of
$p^{te/\delta}$.  These four pieces of information provide exactly the
input needed for the reconstruction arguments in
Sections~\ref{sec:e2} and~\ref{sec:e3}.

For $e=2$, reducibility forces $\delta=2$, so every proper factor has
$t=1$ and hence degree $n/2$.  For $e=3$, reducibility forces
$\delta=3$, and the two possible parameters are $1$ and $2$; in any
nontrivial two-factor decomposition, after interchanging the factors if
necessary, a factor with $t=1$ and degree $n/3$ is therefore available.
The endpoint valuations and the monomial reductions supplied by
Theorem~\ref{thm:factor-structure} normalize these factors, while its
height estimate gives the coefficient control required to pass from
congruence information to exact polynomial identities.

The two completed cases consequently have parallel final forms.  For
$e=2$, the polynomial data determine a set $\mathcal Q_2(f;m)$ of at
most two associated binary quadratic forms, and
Theorem~\ref{thm:e2-main} characterizes reducibility by an arithmetic
representation of $-ar$ by one of these forms with second coordinate
$p$.  Equivalently, Corollary~\ref{cor:q2-pell} reduces the problem to
at most two fixed Pell-type equations.  For $e=3$, the corresponding
set $\mathcal Q_3(f;m)$ contains at most one associated binary cubic
form.  When this set is nonempty, Theorem~\ref{thm:e3-main}
characterizes reducibility by a single binary cubic equation, while
Corollary~\ref{cor:q3-thue} gives finiteness of the reducible primes
when an integral multiple of the associated cubic form is irreducible.
Thus, in both cases, the reducibility problem for sufficiently large
primes separates into a polynomial preprocessing step depending only on
$f$ and $m$, followed by a fixed Diophantine equation in which $p$
appears as one coordinate.

Two independent difficulties arise for higher exponents.  The first is
structural.  When $\delta\ge4$, Theorem~\ref{thm:factor-structure} does
not force the existence of a factor with $t=1$.  For example, when
$\delta=4$, the theorem permits a two-factor degree configuration with
$t=2$ for both factors, namely $\deg G=\deg K=n/2$.
Thus a factor of the smallest possible degree $n/\delta$ need not be
available, and the reconstruction used for $e=2$ and $e=3$ may have no
canonical starting factor.

The second difficulty concerns $p$-adic depth.  Write $e=\delta E$.
A factor corresponding to $t=1$ has endpoint valuation $E$ by
Theorem~\ref{thm:factor-structure}.  In the quadratic and cubic cases
considered here, $E=1$, and the reduction modulo $p$ gives precisely the
first-order shape needed to start the reconstruction.  When $E>1$,
however, the same reduction controls only the first $p$-adic layer,
whereas an analogous reconstruction is expected to require information
modulo higher powers of $p$.  Such higher-depth information is not
determined by the Newton polygon alone and must be recovered by
additional congruence and height arguments.

These observations suggest two natural directions.  Put
$W_\delta:=x^{m/\delta}$.
For a proper factor corresponding to a parameter $t$, one may ask
whether, under suitable effective hypotheses, it must lie in the
$\Q$-linear span
\[
\operatorname{span}_{\Q}
\{W_\delta^t,W_\delta^{t-1}h,\ldots,h^t\}
\]
for a monic polynomial $h$ of degree $n/\delta$ satisfying an
appropriate normalization.  Independently, when $e/\delta>1$, one must
determine what additional congruence information and effective prime
bounds permit the necessary lifting through higher powers of $p$.  The
exponent $4$ is the first natural testing ground for both issues: when
$\delta=4$, a two-factor decomposition with $t=2$ on both sides is not
excluded, whereas $\delta=2$ gives $E=2$ and hence the first case in
which higher $p$-adic depth is unavoidable.

\section*{Acknowledgements}
Hongjian Li was supported by the Project of Guangdong University of
Foreign Studies (Grant No.~2024RC063).

\end{document}